%% LyX 2.4.1 created this file.  For more info, see https://www.lyx.org/.
%% Do not edit unless you really know what you are doing.
\documentclass[english,11pt]{article}
\usepackage[T1]{fontenc}
\usepackage[utf8]{inputenc}
\usepackage{refstyle}
\usepackage{mathrsfs}
\usepackage{amsmath}
\usepackage{amsthm}
\usepackage{amssymb}
\usepackage{graphicx}

\makeatletter

%%%%%%%%%%%%%%%%%%%%%%%%%%%%%% LyX specific LaTeX commands.

\AtBeginDocument{\providecommand\propref[1]{\ref{prop:#1}}}
\AtBeginDocument{\providecommand\thmref[1]{\ref{thm:#1}}}
\AtBeginDocument{\providecommand\secref[1]{\ref{sec:#1}}}
\AtBeginDocument{\providecommand\asmref[1]{\ref{asm:#1}}}
\AtBeginDocument{\providecommand\lemref[1]{\ref{lem:#1}}}
\AtBeginDocument{\providecommand\figref[1]{\ref{fig:#1}}}
\RS@ifundefined{subsecref}
  {\newref{subsec}{name = \RSsectxt}}
  {}
\RS@ifundefined{thmref}
  {\def\RSthmtxt{theorem~}\newref{thm}{name = \RSthmtxt}}
  {}
\RS@ifundefined{lemref}
  {\def\RSlemtxt{lemma~}\newref{lem}{name = \RSlemtxt}}
  {}

%%%%%%%%%%%%%%%%%%%%%%%%%%%%%% Textclass specific LaTeX commands.
\theoremstyle{plain}
\newtheorem{thm}{\protect\theoremname}
\theoremstyle{plain}
\newtheorem{prop}[thm]{\protect\propositionname}
\theoremstyle{remark}
\newtheorem{rem}[thm]{\protect\remarkname}
\theoremstyle{plain}
\newtheorem{lem}[thm]{\protect\lemmaname}

%%%%%%%%%%%%%%%%%%%%%%%%%%%%%% User specified LaTeX commands.
\usepackage{url}
\usepackage{xcolor}
\usepackage[margin=1in]{geometry}
\usepackage[normalem]{ulem}

\newtheorem{assume}{Assumption}

\makeatother

\usepackage{babel}
\providecommand{\lemmaname}{Lemma}
\providecommand{\propositionname}{Proposition}
\providecommand{\remarkname}{Remark}
\providecommand{\theoremname}{Theorem}

\begin{document}
\title{Nonnegative Low-rank Matrix Recovery Can Have Spurious Local Minima
\thanks{Financial support for this work was provided by NSF CAREER Award ECCS-2047462
and ONR Award N00014-24-1-2671.}}
\author{Richard Y. Zhang\\
Dept. of Electrical and Computer Engineering\\
University of Illinois at Urbana-Champaign\\
306 N Wright St, Urbana, IL 61801\\
\url{ryz@illinois.edu}}
\maketitle
\begin{abstract}
Low-rank matrix recovery is well-known to exhibit \emph{benign nonconvexity}
under the restricted isometry property (RIP): every second-order critical
point is globally optimal, so local methods provably recover the ground
truth. Motivated by the strong empirical performance of projected
gradient methods for nonnegative low-rank recovery problems, we investigate
whether this benign geometry persists when the factor matrices are
constrained to be elementwise nonnegative. In the simple setting of
a rank-1 nonnegative ground truth, we confirm that benign nonconvexity
holds in the fully-observed case with RIP constant $\delta=0$. This
benign nonconvexity, however, is unstable. It fails to extend to the
partially-observed case with any arbitrarily small RIP constant $\delta>0$,
and to higher-rank ground truths $r^{\star}>1$, regardless of how
much the search rank $r\ge r^{\star}$ is overparameterized. Together,
these results undermine the standard stability-based explanation for
the empirical success of nonconvex methods and suggest that fundamentally
different tools are needed to analyze nonnegative low-rank recovery.
\end{abstract}
\global\long\def\A{\mathcal{A}}%
\global\long\def\AT{\A^{*}}%
\global\long\def\B{\mathcal{B}}%
\global\long\def\inner#1#2{\left\langle #1,#2\right\rangle }%
\global\long\def\R{\mathbb{R}}%
\global\long\def\S{\mathbb{S}}%
\global\long\def\Cone{\mathscr{C}}%
\global\long\def\rank{\operatorname{rank}}%
\global\long\def\svec{\operatorname{svec}}%
\global\long\def\vec{\operatorname{vec}}%
\global\long\def\diag{\operatorname{diag}}%
\global\long\def\cond{\operatorname{cond}}%
\global\long\def\tr{\operatorname{tr}}%
\global\long\def\sm{\operatorname{sum}}%
\global\long\def\rip{\operatorname{RIP}}%
\global\long\def\e{\mathbf{e}}%
\global\long\def\one{\mathbf{1}}%
\global\long\def\half{{\textstyle \frac{1}{2}}}%

\section{Introduction}

Low-rank matrix recovery~\cite{recht2010guaranteed}---recovering
an unknown low-rank matrix from partial observations---is a basic
problem in modern machine learning and signal processing. A common
approach is to represent the matrix in factored form, such as $UU^{T}$
or $LR^{T}$, and to fit the factored matrix to data using simple
gradient-based methods. Although the resulting optimization problem
is nonconvex and NP-hard in general, these simple methods often work
surprisingly well in practice. A standard explanation is the manifestation
of \emph{benign nonconvexity in tractable regimes}. For example, under
the Restricted Isometry Property (RIP) with a sufficiently small constant
$\delta$, the optimization landscape becomes devoid of spurious
local minima, so local optimization always converges to the global
minimum~\cite{bhojanapalli2016global,ge2017nospurious,zhu2018global}.

In clustering~\cite{xu2003document,zhuang2024statistically} and
image analysis~\cite{lee1999learning} applications, however, the
factors are required to be elementwise nonnegative $U\ge0$ (or $L,R\ge0$).
Imposing nonnegativity is algorithmically straightforward via a projection
step $U_{ij}\gets\max\{U_{ij},0\}$. Empirically, the resulting projected
gradient methods often behave much like their unconstrained counterparts:
once the underlying problem enters a tractable regime, nonconvex optimization
appears to succeed reliably. In particular, a recent nonnegative low-rank
method for $K$-means clustering~\cite{zhuang2024statistically}
recovers planted Gaussian mixture models from essentially any starting
point as soon as the clusters are well separated enough for recovery
to be polynomial-time solvable by convex relaxation~\cite{chen2021cutoff}.

This empirical parallel---a sharp transition in optimization behavior
coinciding with the onset of tractability---naturally raises the
question: \emph{Does the benign landscape of RIP low-rank matrix recovery
persist once nonnegativity constraints are imposed?} 

This paper performs a systematic investigation and obtains a largely
negative answer. While nonnegative low-rank recovery does exhibit
benign nonconvexity in a single base case, this fails to persist under
mild extensions that are well-known to be benign in unconstrained
recovery:
\begin{itemize}
\item Benign landscape holds with a rank $r^{\star}=1$ ground truth in
the fully observed $\delta=0$ regime, when all $n_{1}n_{2}$ elements
of the $n_{1}\times n_{2}$ rank-1 matrix are directly measured (\propref{nospu}).
\item Benign landscape does not extend to the \emph{partially observed}
case $\delta>0$, no matter how small the RIP constant (\thmref{main}).
As shown in Figure~\ref{fig:counter}, spurious local minima can
be induced at the boundary for any $\delta\to0^{+}$, even though
the unconstrained landscape becomes benign once $\delta<1/2$~\cite{zhang2019sharp,zhang2024improved}.
\item Benign landscape does not extend to \emph{higher-rank} ground truths
$r^{\star}>1$, even when the specific instance of nonnegative recovery
is trivial and easily solved by inspection (\propref{spu2}). 
\item Spurious local minima can persist in the \emph{overparameterized}
regime $r>r^{\star}$, regardless of how large the search rank $r$
is set with respect to the true rank $r^{\star}$ (\thmref{main}
and \propref{spu2}). In contrast, unconstrained recovery enjoys benign
landscape as soon as the overparameterization ratio exceeds $r/r^{\star}>[\delta/(1-\delta)]^{2},$
where $\delta$ is the RIP constant at rank $r+r^{\star}$~\cite{zhang2024improved,zhang2025sharp}.
\end{itemize}
As we explain next, these negative results are significant because
they undermine the stability argument that is widely used to explain
the empirical success of nonconvex optimization algorithms. 

\begin{figure}
\includegraphics[width=0.5\textwidth]{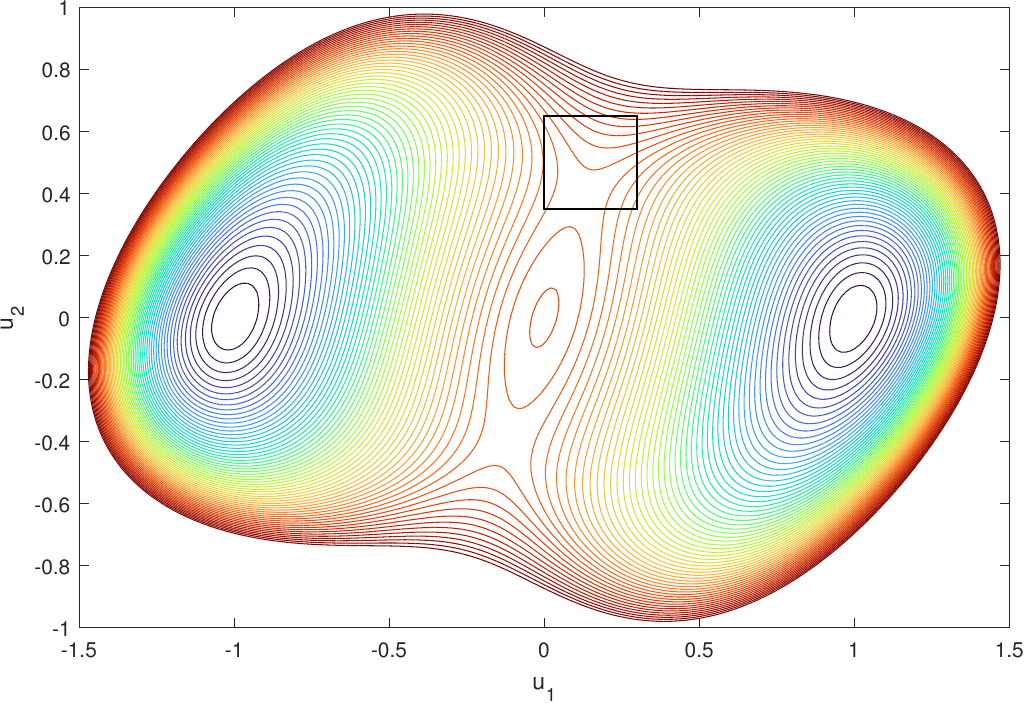}\includegraphics[width=0.5\textwidth]{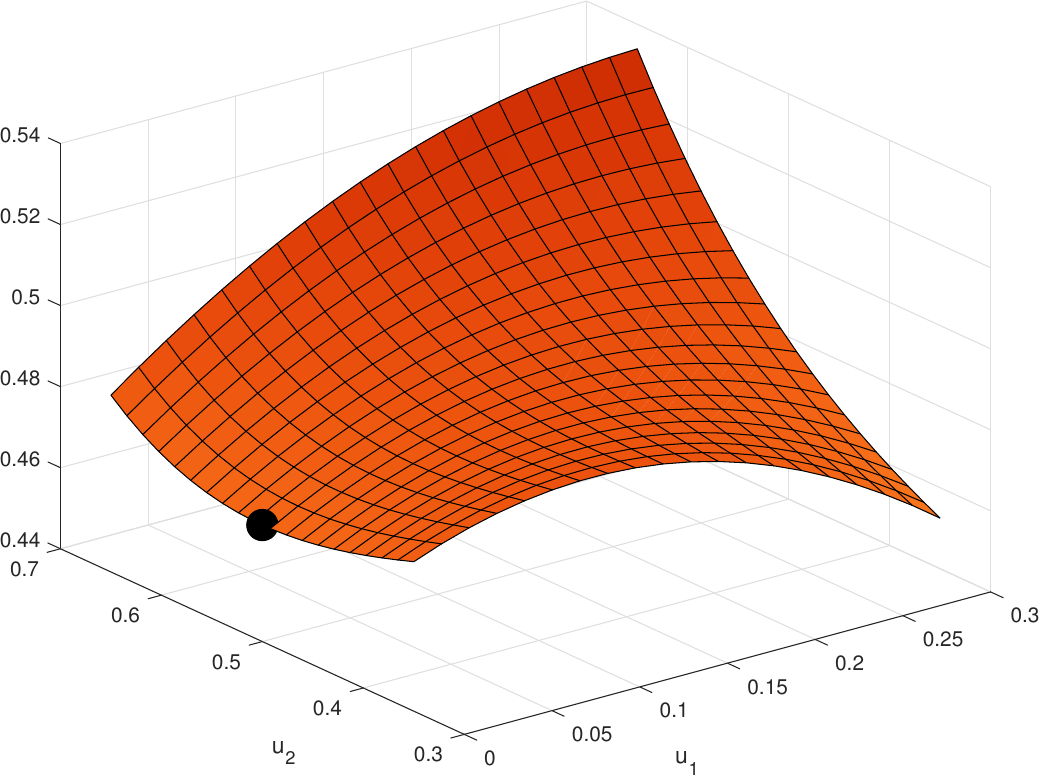}

\caption{\textbf{Rank-1 counterexample with partial observations.} \emph{Left:}
Contour plot of $f(u)=\protect\half\|\mathcal{A}(uu^{T}-u_{\star}u_{\star}^{T})\|^{2}$
taken from \thmref{main} with $n=2$ and $r=1$. The landscape is
benign over the entire domain $\protect\R^{2}$, but the partial observations
$\delta>0$ induce a slight off-axis rotation. \emph{Right:} Restricting
$u$ to the positive orthant $\mathbb{R}_{+}^{2}$ causes the boundary
point $u_{0}=(0,0.5)$ to become a strict local minimizer. }\label{fig:counter}
\end{figure}

\subsection{Benign landscape via stability argument}

A large body of work over the past ten years has established benign
nonconvexity across a wide range of nonconvex optimization problems.
These results span many instances of unconstrained low-rank matrix
recovery, including matrix sensing~\cite{bhojanapalli2016global},
matrix completion~\cite{ge2016matrix}, robust PCA~\cite{ge2017nospurious},
and phase retrieval~\cite{sun2016guaranteed}, as well as equality-constrained
versions, such as phase and orthogonal group synchronization~\cite{bandeira2016low}.
Despite the diversity of settings, all these works share a common
proof strategy. One first identifies a base case ``prototype'' setting
in which the optimization landscape can be analyzed exactly and shown
to be benign. On then proves that the landscape remains stable under
small changes, such as partial observation, imbalance or asymmetry,
higher-rank ground truth, overparameterization, or mild noise.

In unconstrained low-rank matrix recovery, the fully observed symmetric
rank-1 problem serves as the most basic prototype. Consider the unconstrained
symmetric low-rank matrix recovery cost 
\begin{equation}
f:\R^{n\times r}\to\R,\quad U\mapsto\half\|\A(UU^{T}-U_{\star}U_{\star}^{T})\|^{2},\label{eq:fdef}
\end{equation}
where $U_{\star}\in\R^{n\times r^{\star}}$ is unknown, the search
rank $r\ge r^{\star}$, and the linear operator $\A:\R^{n\times n}\to\R^{m}$
is assumed to satisfy the \emph{restricted isometry property}\footnote{Throughout this paper, $\|\cdot\|$ denotes the matrix Euclidean (Frobenius)
norm. } 
\begin{equation}
(1-\delta)\|M\|^{2}\le\|\A(M)\|^{2}\le(1+\delta)\|M\|^{2}\quad\text{ for all }\rank(M)\le r+r^{\star}.\tag{RIP}\label{eq:rip}
\end{equation}
When $\delta=0$ and $r=r^{\star}=1$, the benign geometry of $f$
is easily established by verifying that the usual second-order necessary
condition is also sufficient for global optimality. This property
turns out to be structurally stable; one can show that introducing
small partial observation $\delta>0$, increasing the true rank $r^{\star}\ge1$,
or overparameterizing the search rank $r\ge r^{\star}$ does not create
new second-order critical points~\cite{bhojanapalli2016global,ge2017nospurious}.

The same mechanism underlies extensions to asymmetric factorizations.
The key idea is to view the $n_{1}\times n_{2}$ matrix $LR^{T}$
as the top-right off-diagonal block of the $n\times n$ matrix $UU^{T}$
of the same rank, where $n=n_{1}+n_{2}$ and $U=[L;R]$ is the column-wise
concatenation of the factors. One then analyzes the asymmetric objective
\begin{equation}
g:\R^{n\times r}\to\R,\quad[L;R]\mapsto\half\|\A(LR^{T}-L_{\star}R_{\star}^{T})\|^{2},\label{eq:gdef}
\end{equation}
augmented with the balancing regularizer
\begin{equation}
h:\R^{n\times r}\to\R,\quad[L;R]\mapsto\half\|L^{T}L-R^{T}R\|^{2},\label{eq:hdef}
\end{equation}
which removes the scaling ambiguity $LR^{T}=(LG)(RG^{-T})^{T}$. This
reduction expresses the asymmetric problem as a controlled perturbation
of the symmetric prototype. The stability of benign landscape in the
symmetric case then allows the same property to extend to the asymmetric
setting, up to constant-factor adjustments~\cite{park2017non}. 

Finally, the same stability argument extends to more challenging measurement
models that only approximately satisfy (\ref{eq:rip}). In the matrix
completion, for example, the condition $\|\A(M)\|^{2}\approx\|M\|^{2}$
holds only for low-rank matrices that are sufficiently dense. Ge et
al.~\cite{ge2016matrix} established benign nonconvexity in this
setting by augmenting $f$ with a regularizer that enforces density,
thereby reducing the problem back to the symmetric RIP prototype.
Subsequent work shows that benignness remains stable when the ground
truth matrices $U_{\star},L_{\star},R_{\star}$ are only approximately
low-rank, and when algorithms converge only to approximate critical
points \cite{ge2017nospurious}. 

\subsection{Breakdown of landscape stability under nonnegativity}

Unfortunately, the same stability argument does not carry through
to establish benign nonconvexity under nonnegativity. We consider
both the symmetric variant 
\begin{equation}
\min_{U\in\R^{n\times r}}\quad f(U)\quad\text{ subject to }\quad U\ge0,\label{eq:nmf-sym}
\end{equation}
and the regularized asymmetric variant (for some $\lambda\ge0$)
\begin{equation}
\min_{L\in\R^{n_{1}\times r},R\in\R^{n_{2}\times r}}\quad g([L;R])+\lambda\;h([L;R])\quad\text{ subject to }\quad L\ge0,\quad R\ge0,\label{eq:nmf}
\end{equation}
where $f,g,h$ are defined as in the unconstrained case in (\ref{eq:fdef}),
(\ref{eq:gdef}), and (\ref{eq:hdef}), but with respect to $U_{\star}\in\R_{+}^{n\times r^{\star}},$
$L_{\star}\in\R_{+}^{n_{1}\times r^{\star}},$ and $R_{\star}\in\R_{+}^{n_{2}\times r^{\star}}$
that are further assumed to be elementwise nonnegative. 

In this nonnegative setting, the fully observed $\delta=0$ problem
is NP-hard in general, as it encompasses completely positive factorization~\cite{dickinson2014computational}
and nonnegative matrix factorization~\cite{vavasis2010complexity}.
At the same time, many instances are computationally trivial. For
example, $r^{\star}=1$ reduces to computing a top eigenvector (or
singular vector), and $r^{\star}=2$ reduces to a planar rotation~\cite{kalofolias2012computing}.
These instances also remain tractable for small $\delta>0$, since
one may first recover $UU^{T}$ or $LR^{T}$ by unconstrained methods
and then extract nonnegative factors.

We first confirm that the simplest fully observed rank-1 case is benign.
\begin{prop}
\label{prop:nospu}Let $\A$ satisfy (\ref{eq:rip}) with constant
$\delta=0$. For $r\ge r^{\star}=1$, the problems (\ref{eq:nmf-sym})
and (\ref{eq:nmf}) with $\lambda=1/4$ have no spurious local minima. 
\end{prop}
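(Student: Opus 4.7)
The plan is to characterize the first-order stationary points (FOSPs) of each problem under the KKT conditions and show that every FOSP is either the global minimum or fails the second-order necessary condition (i.e., admits a feasible critical direction of strictly negative Hessian), hence cannot be a local minimum. The rank-one structure of the ground truth, together with the nonnegativity of $u_{\star}$ (resp.~$l_{\star},r_{\star}$), tightens the KKT complementarity relations just enough to pin down the critical points via two linear-algebraic identities at every FOSP: one obtained by tracing the slackness relation against the primal variable, and one obtained by probing the FOSP inequality along a feasibility direction built from the ground truth itself.

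\emph{Symmetric case.} Set $M=UU^{T}\succeq 0$ and $v=U^{T}u_{\star}\ge 0$. The gradient is $\nabla f(U)=2(MU-u_{\star}v^{T})$, so KKT reads $MU\ge u_{\star}v^{T}$ componentwise with $U\odot(MU-u_{\star}v^{T})=0$. Tracing slackness against $U$ yields $\|M\|^{2}=u_{\star}^{T}Mu_{\star}$, while the feasibility direction $\Delta_{j}=u_{\star}e_{j}^{T}$ (admissible since $u_{\star}\ge 0$) yields $(U^{T}U)v\ge\|u_{\star}\|^{2}v$ componentwise. Combined with $u_{\star}^{T}Mu_{\star}\le\lambda_{\max}(M)\|u_{\star}\|^{2}$ and $\|M\|^{2}\ge\lambda_{\max}(M)^{2}$, the first identity forces $\lambda_{\max}(M)\le\|u_{\star}\|^{2}$ whenever $M\ne 0$; the second, upon inner product with $v$, forces $\lambda_{\max}(U^{T}U)\ge\|u_{\star}\|^{2}$ whenever $v\ne 0$. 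Since $U^{T}U$ and $M$ share nonzero spectra, both estimates tighten to equality, which forces $M$ to be rank one with $u_{\star}$ in its leading eigenspace, hence $M=u_{\star}u_{\star}^{T}$ and $f(U)=0$. The two boundary cases are quick: $U\ne 0$ with $v=0$ is impossible because slackness on the support of any nonzero column $u_{j}$ gives $u_{j}^{T}Mu_{j}=\|U^{T}u_{j}\|^{2}=0$, hence $u_{j}=0$; and $U=0$ is not a local minimum because $\nabla^{2}f(0)[u_{\star}e_{1}^{T},u_{\star}e_{1}^{T}]=-2\|u_{\star}\|^{4}<0$ along the feasible critical direction $u_{\star}e_{1}^{T}$.

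\emph{Asymmetric case.} The plan is parallel, with $\Sigma=L^{T}L+R^{T}R$, $\alpha=L^{T}l_{\star}\ge 0$, $\beta=R^{T}r_{\star}\ge 0$, and $M_{\star}=l_{\star}r_{\star}^{T}$. Tracing complementary slackness on $L$ and on $R$ produces the twin identities $\half(\|LR^{T}\|^{2}+\|L^{T}L\|^{2})=\inner{LR^{T}}{M_{\star}}$ and $\half(\|LR^{T}\|^{2}+\|R^{T}R\|^{2})=\inner{LR^{T}}{M_{\star}}$. Adding and simplifying---using crucially the choice $\lambda=1/4$---reduces the objective at any FOSP to $F(L,R)=\half(\|M_{\star}\|^{2}-\inner{LR^{T}}{M_{\star}})$ and yields $\|\Sigma\|^{2}=4\inner{LR^{T}}{M_{\star}}$, so $F\ge 0$ implies $\|\Sigma\|\le 2\|M_{\star}\|$. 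The feasibility directions $l_{\star}e_{j}^{T}$ in $L$ and $r_{\star}e_{j}^{T}$ in $R$ give $\Sigma\alpha\ge 2\|l_{\star}\|^{2}\beta$ and $\Sigma\beta\ge 2\|r_{\star}\|^{2}\alpha$ componentwise, from which AM--GM yields $\|\Sigma\|_{\mathrm{op}}\ge 2\|l_{\star}\|\|r_{\star}\|=2\|M_{\star}\|$ whenever $\alpha,\beta\ne 0$. Chaining $\|\Sigma\|_{\mathrm{op}}\le\|\Sigma\|\le 2\|M_{\star}\|$ forces $\Sigma$ to be rank one; writing $\Sigma=\mu ww^{T}$ with $w\ge 0$, and invoking that $A,B\succeq 0$ with $A+B=\mu ww^{T}$ forces each summand to be a nonnegative multiple of $ww^{T}$, we conclude $L=\tilde{l}w^{T}$ and $R=\tilde{r}w^{T}$ with $\tilde{l},\tilde{r}\ge 0$. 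Plugging back into the FOSP identities pins down $\tilde{l}\parallel l_{\star}$ and $\tilde{r}\parallel r_{\star}$ with compatible magnitudes, giving $LR^{T}=M_{\star}$ and $L^{T}L=R^{T}R$, i.e., the global optimum. The degenerate branch $\alpha=\beta=0$ forces $L=R=0$ directly from the FOSP identities; this point fails to be a local minimum along $(\tau l_{\star}e_{j}^{T},\sigma r_{\star}e_{j}^{T})$, whose leading-order effect on $F$ is $-\tau\sigma\|M_{\star}\|^{2}<0$ for small $\tau,\sigma>0$.

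The main technical obstacle is the PSD-summand step in the asymmetric case: extracting from ``$\Sigma$ has rank one'' the common nonnegative rank-one factor $w$ shared by $L$ and $R$. This rests on the elementary fact that $A,B\succeq 0$ with $A+B$ of rank one forces both summands to lie along the same rank-one direction, and it is where the interplay between entrywise nonnegativity of the factors and the PSD structure of their Gram matrices enters in an essential, rather than merely formal, way.
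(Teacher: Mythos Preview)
Your symmetric argument is essentially the paper's: both extract the same two scalar relations from KKT---one from $\langle\nabla f(U),U\rangle=0$ and one from $\langle\nabla f(U),u_{\star}u_{\star}^{T}U\rangle\ge0$---and both rule out $U=0$ via the same descent direction $u_{\star}e_{1}^{T}$. You package the two relations as eigenvalue sandwiches on $\lambda_{\max}(UU^{T})$, whereas the paper combines them directly into $\|UU^{T}-u_{\star}u_{\star}^{T}\|^{2}\le0$; this is a cosmetic difference.

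The asymmetric case is where your route genuinely diverges. The paper does \emph{not} attack $g_{\lambda}$ directly: it observes the algebraic identity (for $\lambda=1/4$, with $J=\diag(I_{n_{1}},-I_{n_{2}})$ and $u_{\star}=[l_{\star};r_{\star}]$)
\[
\nabla g_{1/4}(U)=\tfrac{1}{4}\nabla f(U)+\tfrac{1}{2}Ju_{\star}u_{\star}^{T}JU,
\]
then uses $u_{\star}^{T}Ju_{\star}=0$ to show that the extra term is orthogonal to $u_{\star}u_{\star}^{T}U$ and nonnegative against $U$, so the two scalar hypotheses of the symmetric lemma transfer verbatim. This is a two-line reduction once the identity is spotted. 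Your approach instead works intrinsically: from the twin traced-slackness identities you extract $\|\Sigma\|_{F}\le2\|M_{\star}\|$, and from the $l_{\star}e_{j}^{T}$, $r_{\star}e_{j}^{T}$ directional inequalities you extract $\|\Sigma\|_{\mathrm{op}}\ge2\|M_{\star}\|$, forcing $\Sigma=L^{T}L+R^{T}R$ to be rank one; then the PSD-summand observation gives $L=\tilde{l}w^{T}$, $R=\tilde{r}w^{T}$, and the entrywise KKT on this rank-one ansatz pins down $\tilde{l}\parallel l_{\star}$, $\tilde{r}\parallel r_{\star}$ with the correct scalings. This is correct and self-contained---it does not need the balanced hypothesis $\|l_{\star}\|=\|r_{\star}\|$ that the paper invokes, and it avoids the somewhat magical gradient identity---but it is considerably longer, and the final ``plugging back'' step (showing $c_{l}c_{r}=1$ and $\|\tilde{l}\|=\|\tilde{r}\|$) takes more care than your one-line summary suggests. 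The paper's reduction is slicker; yours is more transparent about where nonnegativity actually enters.
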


\begin{rem}
The choice $\lambda=1/4$ is essential; other values of $\lambda$
would break the proof. This weight matches the balancing regularizer
used in prior work on unconstrained low-rank recovery~\cite{park2017non,ge2017nospurious}.
\end{rem}

But in contrast to the unconstrained regime, this benign behavior
does not extend to any value of $\delta>0$, regardless of overparameterization.
Below, $\one_{r}$ denotes the length-$r$ vector of ones, and $I_{r}$
the $r\times r$ identity matrix.
\begin{thm}
\label{thm:main}For $r\ge r^{\star}\ge1$, choose $Q_{1}\in\R_{+}^{n\times r^{\star}}$
and $Q_{2}\in\R_{+}^{n\times r}$ so that the concatenation $\begin{bmatrix}Q_{1} & Q_{2}\end{bmatrix}$
consists of the first $r+r^{\star}\le n$ columns of an $n\times n$
permutation matrix. For $\epsilon>0$ and $0<\alpha<(r/r^{\star}+2\epsilon^{2}r)^{-1/4}$,
define $\A:\R^{n\times n}\to\R^{n\times n}$ to implement the following
map 
\[
X\mapsto X+\frac{\tr Q_{1}^{T}XQ_{1}}{r^{\star}}\begin{bmatrix}Q_{1} & Q_{2}\end{bmatrix}\begin{bmatrix}(-1+\sqrt{1-\delta^{2}})I_{r^{\star}} & -\epsilon\alpha^{2}\one_{r^{\star}}\one_{r}^{T}\\
-\epsilon\alpha^{2}\one_{r}\one_{r^{\star}}^{T} & \alpha^{2}I_{r}
\end{bmatrix}\begin{bmatrix}Q_{1} & Q_{2}\end{bmatrix}^{T}
\]
with $\delta=\alpha^{2}\sqrt{r/r^{\star}+2\epsilon^{2}r}$. Then,
$\A$ satisfies (\ref{eq:rip}) with constant $0<\delta<1$, but the
instance of (\ref{eq:nmf-sym}) with $U_{\star}=Q_{1}$ has a spurious
strict local minimum at $U_{0}=\alpha Q_{2}$. Moreover, the instance
of (\ref{eq:nmf}) with $L_{\star}=R_{\star}=Q_{1}$ has a spurious
second-order critical point at $L_{0}=R_{0}=\alpha Q_{2}$ for any
$\lambda\ge0$, which is a strict local minimum whenever $\lambda>0$.
\end{thm}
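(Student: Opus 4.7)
The plan is to verify the three claims in sequence: (i) RIP holds with constant $\delta$, (ii) $U_0 = \alpha Q_2$ is a spurious strict local minimum of the symmetric problem, and (iii) $(L_0, R_0) = (\alpha Q_2, \alpha Q_2)$ is a spurious second-order critical point (strictly a local minimum when $\lambda > 0$) of the asymmetric problem. For the RIP, I would exploit that the perturbation $\A(X) - X = c\, Q B Q^T$, with $Q = [Q_1, Q_2]$ orthonormal (since $Q_1, Q_2$ are disjoint columns of a permutation matrix) and $c = \inner{X}{Q_1 Q_1^T}/r^\star$, is rank-one as a linear operator on $X$. Decomposing $X = \phi(X)\, Q_1 Q_1^T/\sqrt{r^\star} + X^\perp$ with $\phi(X) = \inner{X}{Q_1 Q_1^T}/\sqrt{r^\star}$, a direct expansion of $\|\A(X)\|^2 - \|X\|^2$ produces two $\phi(X)^2$ contributions that cancel exactly, thanks to the choice $B_{11} = (-1+\sqrt{1-\delta^2}) I_{r^\star}$ and the consequent identity $\|B\|^2 = 2r^\star(1-\sqrt{1-\delta^2})$. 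The surviving cross term is $(2/\sqrt{r^\star})\phi(X)\inner{X^\perp}{N^\perp}$, where $N^\perp$ denotes the off-$Q_1 Q_1^T$ component of $QBQ^T$; using the key identity $\|N^\perp\|^2 = 2\epsilon^2 \alpha^4 r^\star r + \alpha^4 r = r^\star \delta^2$, Cauchy--Schwarz followed by AM--GM yields $|\|\A(X)\|^2 - \|X\|^2|\le \delta\|X\|^2$ for every $X$, establishing RIP.

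For the symmetric spurious local minimum, I would first compute the residual $\A(E_0)$ at $E_0 = U_0 U_0^T - U_\star U_\star^T$ using $\tr(Q_1^T E_0 Q_1) = -r^\star$, and then apply $\A^*$ to obtain $Y := \A^*\A(E_0) = (-1 + \alpha^4 r/r^\star) Q_1 Q_1^T + \epsilon\alpha^2 (Q_1 \one \one^T Q_2^T + Q_2 \one \one^T Q_1^T)$, from which $\nabla f(U_0) = 2YU_0 = 2\epsilon\alpha^3 Q_1\one\one^T \ge 0$ with strict positivity precisely in the rows where $U_0 = 0$. KKT therefore holds with strict complementary slackness. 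To prove strict local minimality, I would decompose a feasible perturbation as $V = V_A + V_B$ by row-support ($V_A$ on the $Q_1$-rows, with $V_A \ge 0$ by feasibility) and use the exact identity $f(U_0+V) - f(U_0) = \inner{Y}{D} + \half\|\A(D)\|^2$ where $D = (U_0+V)(U_0+V)^T - U_0 U_0^T$. When $V_A = 0$, the $Q_1$-block of $D$ vanishes, so $\A(D) = D$ and $\inner{Y}{D} = 0$, and the cost change reduces to $\half\|UU^T - U_0 U_0^T\|^2$, which is strictly positive for small $V \neq 0$ because the only nonnegative $U$ near $U_0$ with $UU^T = \alpha^2 Q_2 Q_2^T$ is $U_0$ itself (any such factor must have the form $\alpha Q_2 \Pi$ for a permutation $\Pi$, and only $\Pi = I$ lies close to the identity). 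When $V_A \neq 0$, a two-stage comparison first moves to $U_0 + V_B$ (nonnegative cost change by the previous case) and then adds $V_A$, for which the strictly positive first-order cost $\approx 2\epsilon\alpha^3 \sm(V_A)$ dominates the $O(\|V_A\|^2)$ quadratic correction in a small enough neighborhood.

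For the asymmetric case, the same $Y$ appears and yields $\nabla_L g(L_0, R_0) = YR_0 = \epsilon\alpha^3 Q_1\one\one^T = \nabla_R g(L_0, R_0)$, while $\nabla h = 0$ at $(L_0, R_0)$ (since $L_0^T L_0 = R_0^T R_0 = \alpha^2 I_r$), so KKT holds for every $\lambda \ge 0$, giving the spurious second-order critical point claim. For strict local minimality when $\lambda > 0$, I would repeat the symmetric two-stage argument on $(V_L, V_R)$: perturbations in the $Q_1$-rows are strictly penalized by the first-order gain, while perturbations supported off those rows reduce to a sub-problem on the $Q_2$-support where the Hessian of $g$ contributes $\|V_L Q_2^T + Q_2 V_R^T\|^2$ and the Hessian of $\lambda h$ contributes $4\lambda \alpha^2\|\operatorname{sym}(Q_2^T(V_L - V_R))\|^2$, together leaving only the rotational gauge symmetry $(V_L, V_R) = (L_0 K, R_0 K)$ as a degenerate direction; this symmetry is broken at fourth order by the same local-injectivity argument used in the symmetric case.

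The main obstacle will be handling these degenerate directions in the second-order analysis, where the Hessian's kernel contains gauge-symmetry directions along which both linear and quadratic terms vanish. Fortunately, the exact identity $f(U_0 + V) - f(U_0) = \half\|UU^T - U_0 U_0^T\|^2$ (valid when $V_A = 0$) sidesteps an explicit fourth-order Taylor expansion by reducing strict positivity to a local-injectivity statement about nonnegative low-rank factorizations, which follows from the Perron--Frobenius-type observation that nonnegative orthogonal matrices are permutation matrices.
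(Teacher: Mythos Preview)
Your proposal is correct in outline but takes a genuinely different route from the paper, and one step in the asymmetric case is more delicate than you suggest.

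\textbf{RIP.} Your direct expansion with the cancellation $\|B\|^{2}=2r^{\star}(1-\sqrt{1-\delta^{2}})$ followed by Cauchy--Schwarz and AM--GM is correct and self-contained. The paper instead observes that $\A^{T}\A$ is a rank-two perturbation of the identity, $I+bc^{T}+cb^{T}$ with $b^{T}c=0$, so its extreme eigenvalues are $1\pm\|b\|\|c\|=1\pm\delta$. Both arguments are short; the paper's is slightly more structural.

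\textbf{Symmetric strict local minimum.} Here the approaches diverge substantially. You expand $f(U_{0}+V)-f(U_{0})$ exactly and split on whether the $Q_{1}$-rows $V_{A}$ vanish; in the $V_{A}=0$ case you reduce to $\tfrac12\|UU^{T}-U_{0}U_{0}^{T}\|^{2}$ and invoke the discreteness of nonnegative orthogonal matrices (permutations) to rule out nearby factorizations. This works. The paper instead verifies the textbook second-order sufficient condition: on the critical cone $\Cone(U_{0},\nabla f(U_{0}))$ every direction $\dot U=[0;Y;Z]$ is forced to have $Y_{ij}\ge0$ for $i\ne j$, which makes $\langle Y,Y^{T}\rangle\ge0$ and yields the clean quantitative bound $\langle\nabla^{2}f(U_{0})[\dot U],\dot U\rangle\ge2\alpha^{2}\|\dot U\|^{2}$. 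The paper's route avoids any fourth-order or discreteness argument, because the nonnegativity constraints already eliminate the gauge directions from the critical cone.

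\textbf{Asymmetric case.} Your sketch is the weakest part. You assert that only the rotational direction $(V_{L},V_{R})=(L_{0}K,R_{0}K)$ remains degenerate and must be ``broken at fourth order.'' In fact, once the critical cone is written out (it is the product of two copies of the symmetric critical cone, since $\nabla g_{\lambda}(U_{0}^{(2)})=\tfrac12[\nabla f(U_{0});\nabla f(U_{0})]$), the same off-diagonal nonnegativity forces any skew $K$ to vanish, so for $\lambda>0$ the Hessian is strictly positive on the cone with a bound $\ge\min\{4\lambda,1\}\alpha^{2}\|\dot U^{(2)}\|^{2}$---no higher-order analysis is needed. The paper obtains this via a reusable reduction: writing $[\dot L;\dot R]=[\dot U;\dot U]+[\dot V;-\dot V]$, the cross term vanishes because $\A^{T}\A$ maps symmetric to symmetric, the $\dot U$ block inherits the symmetric bound, the $\dot V$ block contributes $\|\A(\dot V U_{0}^{T}-U_{0}\dot V^{T})\|^{2}$ (controlling the skew part of $Q_{2}^{T}\dot V$ and the $Z$-rows), and $\lambda h$ contributes $4\lambda\|U_{0}^{T}(\dot L-\dot R)\|^{2}$ (controlling the remaining symmetric part). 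Your direct two-stage comparison can also be made to work, but you should recognize that the second-order condition is already coercive on the constrained cone, which both simplifies the argument and immediately gives the $\lambda=0$ second-order critical point claim.
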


Thus, the landscape is not stable when moving from the fully observed
$\delta=0$ case to arbitrarily small partial observation $\delta>0$.
Our proof of \thmref{main} works by verifying the second-order sufficient
conditions for local optimality at the stated spurious points. In
the inequality-constrained setting, this conceptually simple task
is in fact NP-hard in general, as it amounts to copositive testing
on a submatrix of the Hessian~\cite{murty1987some}. Therefore, the
main difficulty of the proof is actually our careful construction
of $\A$ to allow copositive testing to be performed by hand; this
turns out to be particularly delicate in the asymmetric case. 

A second failure mode appears even before introducing partial observation.
In contrast to unconstrained recovery, where the fully observed case
is benign for all true ranks $r^{\star}$, nonnegative recovery can
already be non-benign at $\delta=0$ once $r^{\star}>1$. Even the
seemingly innocuous $r^{\star}=2$ fully observed setting can exhibit
spurious local minima.
\begin{thm}
\label{prop:spu2}Let $\A$ satisfy (\ref{eq:rip}) with constant
$\delta=0$. For $m>r\ge r^{\star}>1$, the instances of the problems
(\ref{eq:nmf-sym}) and (\ref{eq:nmf}) with $U_{\star}=L_{\star}=R_{\star}$
each have a spurious local minimum at $U=L=R=U_{0}$, where 
\[
U_{0}=\begin{bmatrix}\frac{1}{\sqrt{r}}\one_{m}\one_{r}^{T}\\
0
\end{bmatrix},\quad U_{\star}=\begin{bmatrix}\one_{m} & 0\\
0 & I_{r^{\star}-1}
\end{bmatrix}.
\]
\end{thm}

These examples are computationally trivial. They are instances of
the exact factorization problem (for $m>r\ge r^{\star}>1$) 
\[
\text{find }U\in\R_{+}^{n\times r}\ \text{such that}\ UU^{T}=M,\;M=\begin{bmatrix}\one_{m}\one_{m}^{T} & 0\\
0 & I_{r^{\star}-1}
\end{bmatrix}.
\]
The matrix $M$ is block diagonal with $r^{\star}$ rank-1 blocks.
The correct solution is evident by inspection, so the presence of
spurious local minima in this example cannot be attributed to computational
difficulty. Instead, it arises from the geometry of the nonnegative
factorization itself, exacerbated by extreme imbalance among the blocks.

This counterexample also admits a simple clustering interpretation.
The matrix $M$ can be viewed as the Gram matrix of $r^{\star}$ groups
of points in $\R^{r^{\star}}$, where each group consists of identical
points located at a distinct vertex of the standard simplex. Recovering
$U$ then amounts to assigning each point to one of these $r^{\star}$
clusters via a one-hot encoding. For example, the $r^{\star}=2$ instance
reduces to the problem of partitioning $m$ points at $(1,0)$ and
one point at $(0,1)$ into two clusters. 

\propref{spu2} then carries a cautionary message for clustering applications.
Even when clusters are perfectly separated, severe imbalances in cluster
size can induce spurious local minima. Empirical successes reported
in the clustering literature may therefore rely, implicitly or explicitly,
on additional normalization or balance conditions that preclude such
degeneracies. For example, the favorable behavior observed in~\cite{zhuang2024statistically}
may be aided by the norm constraint $\|U\|^{2}=r^{\star}$ imposed
alongside nonnegativity $U\ge0$, which help prevent the type of collapse
exhibited here.

\subsection{Organization}

The remainder of this paper is organized is follows. In \secref{prelim},
we formally define what it means for a point $U$ to be located at
a spurious local minimum, and to state the necessary and the second-order
sufficient conditions for local optimality. In \secref{counter},
we prove \thmref{main} by verifying the sufficient conditions at
the spurious point. In \secref{counter}, we prove \propref{spu2}
directly. In \secref{nospu}, we prove \propref{nospu} by verifying
that the necessary conditions imply global optimality when $\delta=0$
and $r^{\star}=1$. Finally, in \secref{conclu}, we conclude by speculating
on possible future directions that might lead to benign nonconvexity
guarantees. 

\section{Preliminaries}\label{sec:prelim}

\subsection{Necessary and sufficient conditions for local optimality}

Consider minimizing a twice differentiable cost $f:\R_{+}^{n\times r}\to\R$.
The point $U_{0}\in\R_{+}^{n\times r}$ is said to be at a \emph{local
minimum} (resp. is a \emph{local minimizer}) if its value is minimum
within a feasible neighborhood 
\[
f(U_{0})\le f(U)\quad\text{for all }U\in\R_{+}^{n\times r}\cap B(U_{0},\epsilon)
\]
where $B(U_{0},\epsilon)$ denotes the open Euclidean ball of radius
$\epsilon>0$ centered at $U_{0}$. It is further said to be a \emph{strict}
local minimum if the inequality is strict for all $U\ne U_{0}$. In
either case, we say that $U_{0}$ is \emph{spurious} if it is strictly
worse than the global minimum $f(U_{0})>f(U_{\star})$. 

Our proof verifies local optimality through the \emph{second-order
sufficient condition}. The first-order part of the condition reads
\begin{equation}
U\in\R_{+}^{n\times r},\quad\nabla f(U)\in\R_{+}^{n\times r},\quad\inner{\nabla f(U)}U=0,\label{eq:foc}
\end{equation}
while the second-order part additionally requires
\begin{gather}
\inner{\nabla^{2}f(U)[\dot{U}]}{\dot{U}}>0\text{ for all }\dot{U}\ne0,\;\dot{U}\in\Cone(U,\nabla f(U)),\label{eq:socs}\\
\text{where }\Cone(U,G)=\left\{ \dot{U}\in\R^{n\times r}:\begin{array}{cc}
\dot{U}_{i,j}=0 & \text{ for }G_{i,j}>0,\\
\dot{U}_{i,j}\ge0 & \text{ for }U_{i,j}=0.
\end{array}\right\} \nonumber 
\end{gather}
The set $\Cone(U,G)$ here is called the \emph{critical cone}. It
follows from standard Lagrangian theory that if $U_{0}$ satisfies
(\ref{eq:foc}) and (\ref{eq:socs}) then it is a strict local minimizer,
see~\cite[Theorem 12.6]{nocedal2006numerical}. In fact, as clarified
by the copositivity-based framework developed in~\cite[Theorem~1.1]{bomze2016copositivity},
these conditions imply the stronger conclusion that $U_{0}$ is a
\emph{strong} local minimizer, in the sense that a local quadratic
growth bound $f(U)\ge f(U_{0})+\frac{\mu}{2}\|U-U_{0}\|^{2}$ holds
in a neighborhood of $U_{0}$. However, as pointed out by Murty and
Kabadi~\cite{murty1987some}, verifying that (\ref{eq:socs}) holds
is NP-hard in general, as it amounts to testing whether a submatrix
of the Hessian is strictly copositive. 

In reverse, every local minimizer $U_{0}$ must satisfy the \emph{second-order
necessary condition}, which shares the same first-order condition
(\ref{eq:foc}) but has the following nonstrict second-order part
\begin{gather}
\inner{\nabla^{2}f(U)[\dot{U}]}{\dot{U}}\ge0\text{ for all }\dot{U}\in\Cone(U,\nabla f(U)).\label{eq:socn}
\end{gather}
A point $U_{0}$ that satisfies (\ref{eq:foc}) and (\ref{eq:socn})
is called a \emph{second-order critical point}; it can be either a
local minimizer or a higher-order saddle point, see~\cite[Theorem 12.5]{nocedal2006numerical}.
The fact that all local minimizers satisfy (\ref{eq:foc}) and (\ref{eq:socn})
is due to the linearity of the constraint $U\ge0$, so the tangent
and linearization cones coincide, and the Abadie constraint qualification
holds. Our proof of the inexistence of spurious local minima for $\delta=0$
and $r^{\star}=1$ work by using (\ref{eq:foc}) and (\ref{eq:socn})
to imply $UU^{T}=U_{\star}U_{\star}^{T}$.

\subsection{Gradients and Hessians}

To verify that a point $U$ satisfies the second-order sufficient
condition (\ref{eq:foc}) and (\ref{eq:socs}), we will need to evaluate
the gradient and the Hessian quadratic form of $f(U)=\frac{1}{2}\|\A(UU^{T}-U_{\star}U_{\star}^{T})\|^{2}$,
written
\begin{gather}
\nabla f(U)=(S_{U}+S_{U}^{T})U,\qquad S_{U}=\AT\A(UU^{T}-U_{\star}U_{\star}^{T}),\label{eq:fderiv}\\
\inner{\nabla^{2}f(U)[\dot{U}]}{\dot{U}}=\inner{S_{U}+S_{U}^{T}}{\dot{U}\dot{U}^{T}}+\|\A(U\dot{U}^{T}+\dot{U}U^{T})\|^{2}.\nonumber 
\end{gather}
Here, $\AT:\R^{m}\to\R^{n\times n}$ denotes the adjoint map of $\A$.
The gradient and Hessian quadratic form of the function $g(U)=\frac{1}{2}\|\A(LR^{T}-L_{\star}R_{\star}^{T})\|^{2}$
at $U=[L;R]$ and $\dot{U}=[\dot{L};\dot{R}]$ are respectively
\begin{gather}
\nabla g(U)=\begin{bmatrix}S_{L,R}R\\
S_{L,R}^{T}L
\end{bmatrix},\qquad S_{L,R}=\AT\A(LR^{T}-L_{\star}R_{\star}^{T}),\label{eq:gderiv}\\
\inner{\nabla^{2}g(U)[\dot{U}]}{\dot{U}}=2\inner{S_{L,R}}{\dot{L}\dot{R}^{T}}+\|\A(L\dot{R}^{T}+\dot{L}R^{T})\|^{2}.\nonumber 
\end{gather}
We will also need to evaluate the Hessian cross terms, for $\dot{U}_{1}=[\dot{L}_{1};\dot{R}_{1}]$
and $\dot{U}_{2}=[\dot{L}_{2};\dot{R}_{2}]$:
\begin{equation}
\inner{\nabla^{2}g(U)[\dot{U}_{1}]}{\dot{U}_{2}}=\inner{S_{L,R}}{\dot{L}_{1}\dot{R}_{2}^{T}+\dot{L}_{2}\dot{R}_{1}^{T}}+\inner{\A(L\dot{R}_{1}^{T}+\dot{L}_{1}R^{T})}{\A(L\dot{R}_{2}^{T}+\dot{L}_{2}R^{T})}.\label{eq:gcross}
\end{equation}
For the balancing regularizer $h([L;R])=\frac{1}{2}\|\A(LR^{T}-L_{\star}R_{\star}^{T})\|^{2}$
at $U=[L;R]$ and $\dot{U}=[\dot{L};\dot{R}]$, these are respectively
\begin{gather}
\nabla h(U)=\begin{bmatrix}LD\\
-RD
\end{bmatrix},\qquad D=L^{T}L-R^{T}R,\label{eq:hderiv}\\
\inner{\nabla^{2}h(U)[\dot{U}]}{\dot{U}}=2\inner D{\dot{L}^{T}\dot{L}-\dot{R}^{T}\dot{R}}+4\|L^{T}\dot{L}-R^{T}\dot{R}\|^{2}.\nonumber 
\end{gather}

\section{Counterexamples for $\delta>0$ and any $r\ge r^{\star}\ge1$}\label{sec:counter}

\subsection{Symmetric factorization}

\global\long\def\H{\mathcal{H}}%
Without loss of generality, we can set $Q=[Q_{1},Q_{2}]=[\e_{1},\dots,\e_{r+r^{\star}}]$
where $\e_{i}$ is the $i$-th column of $I_{n}$. Our proof of \thmref{main}
relies on the insight that the geometric landscape of $f(U)$ is entirely
determined by the kernel operator $\H=\AT\A$, rather than the specific
choice of $\A$ itself. In other words, while there are many linear
maps $\A$ that satisfy $\AT\A=\H$, the objective value, gradient,
and Hessian of $f$ depend only on $\H$; the specific realization
$\A$ and its adjoint $\A^{*}$ are never ``seen.''

The following is the critical structural property on $\H$ that underpins
this paper.

\begin{assume}\label{asm:Anrm}For $\alpha,\epsilon>0$, let the
kernel operator $\H\equiv\AT\A,$ $\H:\R^{n\times n}\to\R^{n\times n}$
satisfy 
\begin{gather*}
\H(X)=X+C\inner B{Q^{T}XQ}+B\inner C{Q^{T}XQ}\quad\text{for all }X\in\R^{n\times n}\\
\text{where }B=\begin{bmatrix}0 & -\epsilon\one_{r^{\star}}\one_{r}^{T}\\
-\epsilon\one_{r}\one_{r^{\star}}^{T} & I_{r}
\end{bmatrix},\qquad C=\frac{\alpha^{2}}{r^{\star}}\begin{bmatrix}I_{r^{\star}} & 0\\
0 & 0
\end{bmatrix}.
\end{gather*}
\end{assume}

We first verify that that this structural property also implies the
restricted isometry property. 
\begin{lem}
\label{lem:delta}Suppose that \asmref{Anrm} holds. Then, (\ref{eq:rip})
holds with constant $\delta=\|B\|\|C\|=\alpha^{2}\sqrt{r/r^{\star}+2\epsilon^{2}r}$.
Additionally, if $\delta<1$, then $\H$ is positive definite.
\end{lem}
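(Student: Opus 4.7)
The plan is to establish the stronger statement that $\bigl|\|\A(X)\|^{2}-\|X\|^{2}\bigr|\le\|B\|\|C\|\cdot\|X\|^{2}$ holds for \emph{every} $X\in\R^{n\times n}$, with no rank hypothesis on $X$ at all. This unconditional bound immediately implies (\ref{eq:rip}) with the advertised constant, so the low-rank restriction in (\ref{eq:rip}) plays no active role and the proof reduces to a purely linear-algebraic inequality on the perturbation term in \asmref{Anrm}.

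First I would use the fact that the columns of $Q$ are orthonormal (being columns of an $n\times n$ permutation matrix) to conclude $\|Q^{T}XQ\|\le\|X\|$. The crucial structural observation is then that $B$ and $C$ have \emph{disjoint supports}: $C$ is nonzero only on the top-left $r^{\star}\times r^{\star}$ block, whereas $B$ is identically zero on exactly that block. Writing $Y=Q^{T}XQ$ and letting $Y_{B},Y_{C}$ denote the restrictions of $Y$ to the supports of $B$ and $C$, we have $\inner{B}{Y}=\inner{B}{Y_{B}}$, $\inner{C}{Y}=\inner{C}{Y_{C}}$, and by disjointness
\[
\|Y_{B}\|^{2}+\|Y_{C}\|^{2}\le\|Y\|^{2}\le\|X\|^{2}.
\]

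Applying Cauchy--Schwarz to each inner product and then AM--GM to the resulting product of norms gives
\[
\bigl|\inner{B}{Y}\inner{C}{Y}\bigr|\le\|B\|\|C\|\|Y_{B}\|\|Y_{C}\|\le\tfrac{1}{2}\|B\|\|C\|\bigl(\|Y_{B}\|^{2}+\|Y_{C}\|^{2}\bigr)\le\tfrac{1}{2}\|B\|\|C\|\|X\|^{2}.
\]
Doubling and substituting into the identity in \asmref{Anrm} closes the RIP bound. Finally I would carry out the elementary Frobenius-norm computations $\|B\|^{2}=r+2rr^{\star}\epsilon^{2}$ (the two off-diagonal blocks each contribute $rr^{\star}\epsilon^{2}$ and the bottom-right identity contributes $r$) and $\|C\|^{2}=\alpha^{4}/r^{\star}$, from which $\|B\|\|C\|=\alpha^{2}\sqrt{r/r^{\star}+2\epsilon^{2}r}$ follows.

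The only place where genuine care is required is the factor of two: the naive chain $|\inner{B}{Y}|\le\|B\|\|Y\|\le\|B\|\|X\|$ together with the corresponding bound for $C$ would only produce $\delta=2\|B\|\|C\|$, which is off by a factor of $2$ from the claim. Exploiting the disjoint-support structure of $B$ and $C$ to split $\|Y\|^{2}$ and then rebalancing through AM--GM is what recovers the sharp constant, and this is also where the specific choice of $B$ and $C$ in \asmref{Anrm} is pulling its weight for the downstream use of this lemma in the proof of \thmref{main}.
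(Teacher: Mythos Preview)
Your argument is correct and reaches the sharp constant, but the route differs from the paper's. The paper vectorizes the identity in \asmref{Anrm} as
\[
\|\A(X)\|^{2}=\vec(X)^{T}\bigl[I+bc^{T}+cb^{T}\bigr]\vec(X),\qquad b=\vec(QBQ^{T}),\quad c=\vec(QCQ^{T}),
\]
recognizes $\A^{T}\A$ as a rank-two symmetric perturbation of the identity, and reads off the eigenvalues $1\pm\|b\|\|c\|$ using $b^{T}c=\langle B,C\rangle=0$. You instead stay in matrix coordinates and exploit the disjoint supports of $B$ and $C$ directly via Cauchy--Schwarz and AM--GM. The two arguments are close cousins: your disjoint-support observation is exactly what makes $b^{T}c=0$ in the paper, and your AM--GM step is the variational shadow of the explicit eigenvalue formula for $bc^{T}+cb^{T}$ when $b\perp c$. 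The paper's route additionally shows the constant is \emph{attained} (the extreme eigenvalues are exactly $1\pm\|B\|\|C\|$), whereas your inequality-chain proof only gives the upper bound; for the purposes of the lemma this makes no difference, and your version is arguably more elementary.
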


\begin{proof}
\asmref{Anrm} can be rewritten as $\vec[\H(X)]=[I+bc^{T}+cb^{T}]\vec(X)$
where $b=\vec(QBQ^{T})$ and $c=\vec(QCQ^{T})$. Therefore, $\H$
has eigenvalues
\[
\mathrm{eig}(\H)=\{1+b^{T}c\pm\|b\|\|c\|,1,\dots,1\}.
\]
In our case, $b^{T}c=0$ and $\|b\|=\sqrt{r+2\epsilon^{2}rr^{\star}}$
and $\|c\|=\alpha^{2}/\sqrt{r^{\star}}$, so (\ref{eq:rip}) holds
with $\delta=\|b\|\|c\|$. Moveover, if $\delta<1$, then $\mathrm{eig}(\H)>0$. 
\end{proof}
Now, we show that $U_{0}=\alpha Q_{2}$ is a strict local minimizer
for any $\A$ whose $\H=\AT\A$ satisfies \asmref{Anrm}. Our special
choice of $B$ and $C$ in \asmref{Anrm} critically drives the delicate
cancellations needed for the second-order sufficient condition (\ref{eq:foc})-(\ref{eq:socs})
to hold.
\begin{lem}
\label{lem:sym}Suppose that \asmref{Anrm} holds. Then, the function
$f(U)=\frac{1}{2}\|\A(UU^{T}-U_{\star}U_{\star})\|^{2}$ with $U_{\star}=Q_{1}$
satisfies at the point $U_{0}=\alpha Q_{2}$:
\begin{gather*}
f(U_{0})\ge\frac{r^{\star}}{2}(1-\delta^{2}),\quad\nabla f(U_{0})\ge0,\quad\inner{\nabla f(U_{0})}{U_{0}}=0,\\
\inner{\nabla^{2}f(U_{0})[\dot{U}]}{\dot{U}}\ge\alpha^{2}\|\dot{U}\|^{2}\text{ for all }\dot{U}\in\Cone(U_{0},\nabla f(U_{0}))
\end{gather*}
where $\delta=\alpha^{2}\sqrt{r/r^{\star}+2\epsilon^{2}r}$.
\end{lem}

\begin{proof}
We verify for 
\[
E=U_{0}U_{0}^{T}-U_{\star}U_{\star}^{T}=\begin{bmatrix}-I_{r^{\star}} & 0 & 0\\
0 & \alpha^{2}I_{r} & 0\\
0 & 0 & 0
\end{bmatrix},
\]
that we have
\begin{align*}
S_{U}=\AT\A(E) & =\begin{bmatrix}-I_{r^{\star}} & 0 & 0\\
0 & \alpha^{2}I_{r} & 0\\
0 & 0 & 0
\end{bmatrix}+\alpha^{2}r\begin{bmatrix}\frac{\alpha^{2}}{r^{\star}}I_{r^{\star}} & 0 & 0\\
0 & 0 & 0\\
0 & 0 & 0
\end{bmatrix}-\alpha^{2}\begin{bmatrix}0 & -\epsilon\one\one^{T} & 0\\
-\epsilon\one\one^{T} & I_{r} & 0\\
0 & 0 & 0
\end{bmatrix}\\
 & =\alpha^{2}\begin{bmatrix}-\beta I_{r^{\star}} & \epsilon\one\one^{T} & 0\\
\epsilon\one\one^{T} & 0 & 0\\
0 & 0 & 0
\end{bmatrix}\qquad\text{ where }\beta=\alpha^{-2}-\alpha^{2}r/r^{\star}.
\end{align*}
Then, the function value at $U_{0}$ is
\[
f(U_{0})=\frac{1}{2}\|\A(E)\|^{2}=\frac{1}{2}\inner{S_{U}}E=\frac{1}{2}(r^{\star}-\alpha^{4}r)\ge\frac{r^{\star}}{2}(1-\delta^{2}).
\]
the last inequality substitutes $\delta=\alpha^{2}\sqrt{r/r^{\star}+2\epsilon^{2}r}\ge\alpha^{2}\sqrt{r/r^{\star}}.$
To verify (\ref{eq:foc}), we use (\ref{eq:fderiv}) to compute the
gradient at $U_{0}$ to be
\[
\nabla f(U_{0})=2\alpha^{2}\begin{bmatrix}\beta I & \epsilon\one\one^{T} & 0\\
\epsilon\one\one^{T} & 0 & 0\\
0 & 0 & 0
\end{bmatrix}\begin{bmatrix}0\\
\alpha I\\
0
\end{bmatrix}=2\alpha^{3}\begin{bmatrix}\epsilon\one\one^{T}\\
0\\
0
\end{bmatrix},
\]
and therefore $\nabla f(U_{0})\ge0$ and $\inner{\nabla f(U_{0})}{U_{0}}=0$.
To verify (\ref{eq:socs}), we first compute the associated critical
cone to be
\[
\Cone(U_{0},\nabla f(U_{0}))=\left\{ \begin{bmatrix}0\\
Y\\
Z
\end{bmatrix}:\begin{array}{c}
Y\in\R^{r\times r},\;Y_{i,j}\ge0\text{ for all }i\ne j\\
Z\in\R_{+}^{(n-r-r^{\star})\times r}
\end{array}\right\} .
\]
Therefore, for any $\dot{U}\in\Cone(U_{0},\nabla f(U_{0}))$, we have
\begin{gather*}
\inner{S_{U}}{\dot{U}\dot{U}^{T}}=\inner{\alpha^{2}\begin{bmatrix}\beta I & \epsilon\one\one^{T} & 0\\
\epsilon\one\one^{T} & 0 & 0\\
0 & 0 & 0
\end{bmatrix}}{\begin{bmatrix}0\\
Y\\
Z
\end{bmatrix}\begin{bmatrix}0\\
Y\\
Z
\end{bmatrix}^{T}}=0,\\
\|\A(U_{0}\dot{U}^{T}+\dot{U}U_{0}^{T})\|^{2}=\left\Vert \A\left(\begin{bmatrix}0 & 0 & 0\\
0 & \alpha(Y+Y^{T}) & \alpha Z^{T}\\
0 & \alpha Z & 0
\end{bmatrix}\right)\right\Vert ^{2}=\alpha^{2}\left\Vert \begin{bmatrix}Y+Y^{T} & Z^{T}\\
Z & 0
\end{bmatrix}\right\Vert ^{2}.
\end{gather*}
Substituting the above into the Hessian quadratic form in (\ref{eq:fderiv})
yields
\begin{align*}
\inner{\nabla^{2}f(U_{0})[\dot{U}]}{\dot{U}} & =2\inner{S_{U}}{\dot{U}\dot{U}^{T}}+\|\A(U_{0}\dot{U}^{T}+\dot{U}U_{0}^{T})\|^{2}\\
 & =0+2\alpha^{2}\left(\|Y\|^{2}+\inner Y{Y^{T}}+\|Z\|^{2}\right)\\
 & \ge2\alpha^{2}(\|Y\|^{2}+\|Z\|^{2})=2\alpha^{2}\|\dot{U}\|^{2}.
\end{align*}
The second line substituted $\|Y+Y^{T}\|^{2}=\|Y\|^{2}+\|Y^{T}\|^{2}+2\inner Y{Y^{T}}$.
The final line recalls that $Y_{i,j}\ge0$ for all $i\ne j$, and
therefore 
\[
\inner Y{Y^{T}}=\sum_{i,j}Y_{i,j}Y_{j,i}=\sum_{i}Y_{i,i}^{2}+\sum_{i\ne j}Y_{i,j}Y_{j,i}\ge0.
\]
\end{proof}
To summarize, suppose that $\A:\R^{n\times n}\to\R^{m}$ satisfies
\asmref{Anrm} with $\epsilon>0$ and $0<\alpha<(r/r^{\star}+2\epsilon^{2}r)^{-1/4}$.
By \lemref{delta}, $\A$ also satisfies (\ref{eq:rip}) with $\delta=\alpha^{2}\sqrt{r/r^{\star}+2\epsilon^{2}r}<1$.
Nevertheless, by \lemref{sym}, the second-order sufficient condition
(\ref{eq:foc})-(\ref{eq:socs}) holds at a point $U_{0}$ with $f(U_{0})>f(U_{\star})=0$,
so $U_{0}$ is indeed a spurious strict local minimizer. Choosing
$\alpha\to0^{+}$ also sets the RIP constant $\delta\to0^{+}$, so
we conclude that there is no $\delta>0$ that would eliminate all
spurious local minima. 

Formally, a valid choice of $\A$ as describe above is already guaranteed
to exist, because $\H$ is positive definite by \lemref{delta}, so
a decomposition $\H=\AT\A$ always exists. Nevertheless, to provide
a transparent, computationally reproducible example (as used to generate
\figref{counter}), \thmref{main} provides a specific realization
of $\A$. The following lemma confirms this explicit choice is valid. 
\begin{lem}
\label{lem:check}The $\A$ defined in \thmref{main} satisfies \asmref{Anrm}.
\end{lem}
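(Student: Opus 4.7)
The plan is to expand $\|\A(X)\|^2$ directly from the definition and match it term by term to the expression in \asmref{Anrm}. Writing $M$ for the block matrix inside the definition of $\A$ and $\gamma = \tr(Q_1^T X Q_1)/r^\star$, I have $\A(X) = X + \gamma\cdot QMQ^T$ with $Q = [Q_1,Q_2]$. Since $Q$ is a submatrix of a permutation matrix, its columns are orthonormal, so $Q^T Q = I_{r+r^\star}$; this immediately gives $\|QMQ^T\|^2 = \|M\|^2$ and $\inner{X}{QMQ^T} = \inner{M}{Q^T X Q}$. Expanding the Frobenius square produces
\[
\|\A(X)\|^2 = \|X\|^2 + 2\gamma\inner{M}{Q^T X Q} + \gamma^2\|M\|^2,
\]
so the task reduces to matching the cross and quadratic terms against the target $2\inner{B}{Q^T X Q}\inner{C}{Q^T X Q}$.

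The key observation is that $M$ decomposes cleanly as a linear combination of $\alpha^2 B$ and $C$: its off-diagonal and bottom-right blocks coincide with those of $\alpha^2 B$, while its top-left block $(-1+\sqrt{1-\delta^2})I_{r^\star}$ equals $(r^\star(\sqrt{1-\delta^2}-1)/\alpha^2)\cdot C$. Simultaneously, the prefactor $\gamma$ equals $\alpha^{-2}\inner{C}{Q^T X Q}$ directly from the form of $C$. Substituting both identities, the cross-term $2\gamma\inner{M}{Q^T X Q}$ expands as $2\inner{B}{Q^T X Q}\inner{C}{Q^T X Q}$ — exactly the product required by \asmref{Anrm} — plus an unwanted scalar multiple of $\inner{C}{Q^T X Q}^2$.

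The final step is to verify that this unwanted multiple cancels against $\gamma^2\|M\|^2 = \alpha^{-4}\|M\|^2\inner{C}{Q^T X Q}^2$. Both contributions are scalar multiples of the same quantity, so cancellation reduces to a single identity $\|M\|^2 = 2r^\star(1-\sqrt{1-\delta^2})$. Computing $\|M\|^2$ blockwise and expanding $(\sqrt{1-\delta^2}-1)^2 = 2 - 2\sqrt{1-\delta^2} - \delta^2$ shows this identity is equivalent to $\delta^2 = \alpha^4(r/r^\star + 2\epsilon^2 r)$, which is precisely the defining formula for $\delta$ in \thmref{main}. This is the delicate step of the proof: the algebra only closes because the $-r^\star\delta^2$ term arising from the square of $(\sqrt{1-\delta^2}-1)$ exactly balances the remaining block contributions $2r^\star r\epsilon^2\alpha^4 + r\alpha^4$. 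That single balancing identity is the sole real obstacle; everything else is direct substitution, and once it is verified the claim of \asmref{Anrm} follows.
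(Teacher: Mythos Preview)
Your argument is correct. The paper's proof reaches the same conclusion via a slightly different organization: it vectorizes, writing \asmref{Anrm} as $\vec(X)^{T}[I+bc^{T}+cb^{T}]\vec(X)$ with $b=\vec(QBQ^{T})$ and $c=\vec(QCQ^{T})$, and the operator from \thmref{main} as $\vec(\A(X))=[I+(b+\gamma c)c^{T}]\vec(X)$; the verification then reduces to checking $(I+(b+\gamma c)c^{T})^{T}(I+(b+\gamma c)c^{T})=I+bc^{T}+cb^{T}$, which via $b^{T}c=0$ collapses to the single quadratic $\|c\|^{2}\gamma^{2}+2\gamma+\|b\|^{2}=0$. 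Your decomposition $M=\alpha^{2}B+\kappa C$ and the norm identity $\|M\|^{2}=2r^{\star}(1-\sqrt{1-\delta^{2}})$ are exactly this quadratic, written in matrix rather than vectorized coordinates (your $\kappa$ is $\alpha^{2}$ times the paper's $\gamma$). The paper's framing makes the rank-two perturbation structure of $\A^{T}\A$ more visible and dovetails with the eigenvalue computation in \lemref{delta}; your direct expansion is more self-contained and avoids the $\vec(\cdot)$ machinery. Substantively the two are the same computation.
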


\begin{proof}
\asmref{Anrm} can be rewritten as $\vec[\H(X)]=[I+bc^{T}+cb^{T}]\vec(X)$
where $b=\vec(QBQ^{T})$ and $c=\vec(QCQ^{T})$. The $\A$ defined
in \thmref{main} satisfies in turn
\[
\vec(\A(X))=[I+(b+\gamma c)c^{T}]\vec(X)\qquad\text{ where }\gamma=\frac{-1+\sqrt{1-\|b\|^{2}\|c\|^{2}}}{\|c\|^{2}}.
\]
Here, we recall from \lemref{delta} that $\delta=\|b\|\|c\|$. Finally,
we take the difference
\begin{align*}
 & (I+bc^{T}+\gamma cc^{T})^{T}(I+bc^{T}+\gamma cc^{T})-(I+bc^{T}+cb^{T})\\
= & 2\gamma cc^{T}+(bc^{T}+\gamma cc^{T})^{T}(bc^{T}+\gamma cc^{T})\\
= & (\|c\|^{2}\gamma^{2}+2\gamma+\|b\|^{2})cc^{T}=0
\end{align*}
The final line uses $b^{T}c=0$, and then substituted our stated $\gamma$
as a solution to the quadratic equation. 
\end{proof}

\subsection{Asymmetric factorization}

Our proof of the asymmetric part of \thmref{main} is based on the
insight that, for a fixed linear operator $\A$ and target matrix
$M_{\star}=U_{\star}U_{\star}^{T}$, it is possible for the asymmetric
factorization $LR^{T}$ to \emph{inherit} a local minimum from the
symmetric factorization $UU^{T}$. In other words, if $U=U_{0}$ is
a local minimizer for $UU^{T}$, then $L=R=U_{0}$ is a local minimizer
for $LR^{T}$. In order for this inheritance to be possible, the operator
$\A$ must satisfy the following key property. 

\begin{assume}\label{asm:sym}Let $\A:\R^{n\times n}\to\R^{m}$ satisfy
$\AT\A(E)\in\S^{n}$ for all $E\in\S^{n}$.\end{assume}

In fact, this key property is already implied by our critical assumption
from the previous section. Nevertheless, we prefer to isolate \asmref{sym},
because the technique of passing local minima down from $UU^{T}$
to $LR^{T}$ is broadly useful in the study of benign nonconvexity
in low-rank factorizations.
\begin{lem}
\asmref{Anrm} implies \asmref{sym}.
\end{lem}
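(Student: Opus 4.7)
The plan is to extract an explicit formula for the operator $\A^{T}\A$ from \asmref{Anrm}, at which point the symmetry-preservation claim reduces to a short observation about $B$ and $C$.

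First, I would rewrite the squared-norm identity of \asmref{Anrm} as a quadratic form in $\vec(X)$, exactly as already done inside the proof of \lemref{delta}: with $b=\vec(QBQ^{T})$ and $c=\vec(QCQ^{T})$, we have $\|\A(X)\|^{2}=\vec(X)^{T}[I+bc^{T}+cb^{T}]\vec(X)$. Polarizing this quadratic form (equivalently, matching $\inner{Y}{\A^{T}\A(X)}=\inner{\A(Y)}{\A(X)}$ for all $Y$) then yields the explicit expression
\[
\A^{T}\A(X)=X+\inner{QCQ^{T}}{X}\cdot QBQ^{T}+\inner{QBQ^{T}}{X}\cdot QCQ^{T}.
\]

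Second, I would check by direct inspection of the formulas in \asmref{Anrm} that both $B$ and $C$ are symmetric: $C$ is block-diagonal with a symmetric block, and the $(2,1)$-block of $B$ is the transpose of its $(1,2)$-block. Therefore $QBQ^{T}$ and $QCQ^{T}$ both lie in $\S^{n}$. Substituting any $E\in\S^{n}$ for $X$ in the displayed identity makes the two scalars $\inner{QCQ^{T}}{E}$ and $\inner{QBQ^{T}}{E}$ well-defined real numbers, so $\A^{T}\A(E)$ is a linear combination of the three symmetric matrices $E$, $QBQ^{T}$, and $QCQ^{T}$, and is therefore itself symmetric. This is exactly the statement of \asmref{sym}.

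I do not foresee any real obstacle; the computation is essentially mechanical. The only mildly delicate step is the polarization of the quadratic form, but the rank-two outer-product structure of the perturbation $bc^{T}+cb^{T}$ is already isolated in the proof of \lemref{delta}, which makes the adjoint-composition formula above immediate. Everything else is a matter of noting that $B,C\in\S^{r+r^{\star}}$ by inspection.
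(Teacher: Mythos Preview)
Your argument is correct. You compute $\A^{T}\A$ explicitly by polarizing the quadratic form, obtaining $\A^{T}\A(X)=X+\inner{QCQ^{T}}{X}\,QBQ^{T}+\inner{QBQ^{T}}{X}\,QCQ^{T}$, and then read off symmetry-preservation from the fact that $B,C\in\S^{r+r^{\star}}$.

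The paper takes a different, more indirect route: it observes that because $B$ and $C$ are symmetric, the correction term $2\inner{B}{Q^{T}XQ}\inner{C}{Q^{T}XQ}$ vanishes whenever $X$ is skew-symmetric, so that $\|\A(S+K)\|^{2}=\|\A(S)\|^{2}+\|\A(K)\|^{2}$ for every symmetric $S$ and skew-symmetric $K$. Expanding then gives $\inner{\A(S)}{\A(K)}=0$, i.e.\ $\inner{S}{\A^{T}\A(K)}=0$ for all symmetric $S$, which forces $\A^{T}\A(K)$ to be skew-symmetric (and dually $\A^{T}\A(S)$ symmetric). Your approach is more constructive---it produces the actual operator $\A^{T}\A$, which is independently useful---whereas the paper's argument never writes $\A^{T}\A$ down and instead isolates the single structural fact (Pythagorean splitting over $\S^{n}\oplus\S^{n\perp}$) that drives the conclusion. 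Both proofs ultimately rest on the same observation that $B$ and $C$ are symmetric.
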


\begin{proof}
\asmref{Anrm} specifies $\|\A(S+K)\|^{2}=\|\A(S)\|^{2}+\|\A(K)\|^{2}$
for all symmetric $S=S^{T}$ and skew symmetric $K=-K^{T}$. This
implies that $\inner{\A(S)}{\A(K)}=0$ for all $S,K$, which holds
only if $\AT\A(S)=[\AT\A(S)]^{T}$ and $\AT\A(K)=-[\AT\A(K)]^{T}$. 
\end{proof}
Under \asmref{sym}, there is a simple connection between the derivatives
of the symmetric $UU^{T}$ and the asymmetric $LR^{T}$ cases. Note
that we do not yet impose nonnegativity on the factor matrices $U,L,R$. 
\begin{lem}
\label{lem:sym2asym}Suppose that \asmref{sym} holds. Define the
two functions 
\[
f(U)=\frac{1}{2}\|\A(UU^{T}-M^{\star})\|^{2},\quad g([L;R])=\frac{1}{2}\|\A(LR^{T}-M^{\star})\|^{2},
\]
with the same $M^{\star}$. Then, their gradients are related at $U^{(2)}=[U;U]$
as
\[
\nabla g(U^{(2)})=\frac{1}{2}[\nabla f(U);\nabla f(U)],
\]
and their Hessian quadratic forms are related at $U^{(2)}=[U;U]$
for all $\dot{L},\dot{R}$ as 
\begin{align*}
\inner{\nabla^{2}g(U^{(2)})[\dot{L};\dot{R}]}{[\dot{L};\dot{R}]} & =\inner{\nabla^{2}f(U)[\dot{U}]}{\dot{U}}-2\inner{\A(UU^{T}-M^{\star})}{\A(\dot{V}\dot{V}^{T})}\\
 & \qquad\qquad+\|\A(\dot{V}U^{T}-U\dot{V}^{T})\|^{2}
\end{align*}
where $\dot{U}=\frac{1}{2}(\dot{L}+\dot{R})$ and $\dot{V}=\frac{1}{2}(\dot{L}-\dot{R})$.
\end{lem}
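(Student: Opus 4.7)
The plan is to expand the derivative formulas (\ref{eq:fderiv}) and (\ref{eq:gderiv}) at the symmetric point $U^{(2)} = [U;U]$ with $[\dot L;\dot R]$ decomposed as $\dot L = \dot U + \dot V$ and $\dot R = \dot U - \dot V$, and then harvest the cancellations enabled by \asmref{sym}. At $[L;R] = [U;U]$, the residual operator collapses to $S_{L,R} = \A^{T}\A(UU^{T} - M^{\star}) = S_{U}$, and \asmref{sym} forces $S_{U}$ to be symmetric. The gradient identity then drops out immediately, since $\nabla g(U^{(2)}) = [S_{U}U;\, S_{U}^{T}U] = [S_{U}U;\, S_{U}U]$ while $\nabla f(U) = (S_{U}+S_{U}^{T})U = 2S_{U}U$, so $\nabla g(U^{(2)}) = \half[\nabla f(U);\,\nabla f(U)]$.

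For the Hessian, I would substitute the $\dot U,\dot V$ decomposition into (\ref{eq:gderiv}) and split each piece into its symmetric and skew-symmetric parts. The product $\dot L\dot R^{T}$ expands as $\dot U\dot U^{T} - \dot V\dot V^{T} + (\dot V\dot U^{T} - \dot U\dot V^{T})$, in which the parenthesized correction is skew-symmetric; symmetry of $S_{U}$ then wipes it out, leaving $\inner{S_{U}}{\dot L\dot R^{T}} = \inner{S_{U}}{\dot U\dot U^{T}} - \inner{S_{U}}{\dot V\dot V^{T}}$. Likewise, $L\dot R^{T} + \dot L R^{T}$ splits as a symmetric piece $U\dot U^{T} + \dot U U^{T}$ plus a skew piece $\dot V U^{T} - U\dot V^{T}$.

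The crucial intermediate observation is that \asmref{sym} forces $\A$ to map the symmetric and skew-symmetric subspaces of $\R^{n\times n}$ to orthogonal subspaces of $\R^{m}$: for symmetric $S$ and skew $K$, $\inner{\A(S)}{\A(K)} = \inner{\A^{T}\A(S)}{K} = 0$ because $\A^{T}\A(S)$ is symmetric and therefore orthogonal to any skew matrix. Consequently, $\|\A(L\dot R^{T} + \dot L R^{T})\|^{2}$ splits additively into $\|\A(U\dot U^{T} + \dot U U^{T})\|^{2} + \|\A(\dot V U^{T} - U\dot V^{T})\|^{2}$. Collecting terms and matching against (\ref{eq:fderiv}) identifies the $\dot U$ contribution with $\inner{\nabla^{2}f(U)[\dot U]}{\dot U}$, while the residual $-2\inner{S_{U}}{\dot V\dot V^{T}}$ rewrites as $-2\inner{\A(UU^{T}-M^{\star})}{\A(\dot V\dot V^{T})}$ and the term $\|\A(\dot V U^{T}-U\dot V^{T})\|^{2}$ appears directly, giving the stated identity.

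The derivation is essentially algebraic bookkeeping, and the only substantive step is the symmetric/skew orthogonality, which is a one-line consequence of \asmref{sym}; I do not anticipate any real obstacle. Nonnegativity plays no role here, consistent with the paper's framing of this inheritance technique as a structural statement relating the unconstrained $UU^{T}$ and $LR^{T}$ parametrizations at the symmetric-point intersection $L=R=U$.
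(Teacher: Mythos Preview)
Your proposal is correct and follows essentially the same approach as the paper. Both arguments decompose $[\dot L;\dot R]=[\dot U;\dot U]+[\dot V;-\dot V]$ and use \asmref{sym} to kill the resulting cross terms; the only difference is organizational---the paper routes this through the bilinear Hessian formula (\ref{eq:gcross}) to show $\inner{\nabla^{2}g(U^{(2)})[\dot U;\dot U]}{[\dot V;-\dot V]}=0$, whereas you expand the quadratic form (\ref{eq:gderiv}) directly and invoke the symmetric/skew orthogonality of $\A$ on each piece.
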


\begin{proof}
Under \asmref{sym}, the matrix $S=\AT\A(UU^{T}-M^{\star})$ is symmetric,
and therefore
\[
\nabla g(U^{(2)})=\begin{bmatrix}SU\\
S^{T}U
\end{bmatrix}=\frac{1}{2}\begin{bmatrix}(S+S^{T})U\\
(S+S^{T})U
\end{bmatrix}=\frac{1}{2}\begin{bmatrix}\nabla f(U)\\
\nabla f(U)
\end{bmatrix}.
\]
Next, using (\ref{eq:gcross}), we find that the following Hessian
cross term is zero, 
\begin{align*}
\inner{\nabla^{2}g(U^{(2)})[\begin{bmatrix}\dot{U}\\
\dot{U}
\end{bmatrix}]}{\begin{bmatrix}\dot{V}\\
-\dot{V}
\end{bmatrix}} & =\langle S,\ddot{K}\rangle+\inner{\A(\dot{U}U^{T}+U\dot{U}^{T})}{\A(\dot{K})}\\
 & =0+\inner{\AT\A(\dot{U}U^{T}+U\dot{U}^{T})}{\dot{K}}=0,
\end{align*}
because $\ddot{K}=\dot{V}\dot{U}^{T}-\dot{U}\dot{V}^{T}$ and $\dot{K}=\dot{V}U^{T}-U\dot{V}^{T}$
are skew-symmetric, while both $S$ and $\AT\A(\dot{U}U^{T}+U\dot{U}^{T})$
are symmetric under \asmref{sym}. Therefore, the Hessian quadratic
form comprises just the two block-diagonal terms, which we can again
evaluate using (\ref{eq:gcross}):
\begin{gather*}
\inner{\nabla^{2}g(U^{(2)})[\begin{bmatrix}\dot{L}\\
\dot{R}
\end{bmatrix}]}{\begin{bmatrix}\dot{L}\\
\dot{R}
\end{bmatrix}}=\inner{\nabla^{2}g(U^{(2)})[\begin{bmatrix}\dot{U}\\
\dot{U}
\end{bmatrix}]}{\begin{bmatrix}\dot{U}\\
\dot{U}
\end{bmatrix}}+\inner{\nabla^{2}g(U^{(2)})[\begin{bmatrix}\dot{V}\\
-\dot{V}
\end{bmatrix}]}{\begin{bmatrix}\dot{V}\\
-\dot{V}
\end{bmatrix}}\\
=\inner{\nabla^{2}f(U)[\dot{U}]}{\dot{U}}+2\inner S{-\dot{V}\dot{V}^{T}}+\|\A(\dot{V}U^{T}-U\dot{V}^{T})\|^{2}.
\end{gather*}
\end{proof}
We also recall the derivatives of the balancing regularizer from (\ref{eq:hderiv}). 
\begin{lem}
\label{lem:bal}Let $h([L;R])=\frac{1}{2}\|L^{T}L-R^{T}R\|^{2}$.
Its gradient and Hessian quadratic form are evaluated at $U^{(2)}=[U;U]$
as follows for all $\dot{L},\dot{R}$:
\[
\nabla h(U^{(2)})=0,\quad\inner{\nabla^{2}h(U^{(2)})[\dot{L};\dot{R}]}{[\dot{L};\dot{R}]}=4\|U^{T}(\dot{L}-\dot{R})\|^{2}.
\]
\end{lem}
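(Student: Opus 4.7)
The plan is to obtain this lemma as an immediate specialization of the general derivative formulas for $h$ already recorded in (\ref{eq:hderiv}). Recall that for a general point $[L;R]$, (\ref{eq:hderiv}) expresses the gradient and Hessian of $h$ in terms of the imbalance matrix $D = L^T L - R^T R$. At the specific point $U^{(2)} = [U;U]$, we have $L = R = U$, so $D$ vanishes identically. I would begin by making this observation explicit.

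From there, the gradient formula $\nabla h([L;R]) = [LD; -RD]$ collapses to zero since $D = 0$, which yields the first claim with no further work. For the Hessian, I would substitute $L = R = U$ and $D = 0$ into
\[
\inner{\nabla^2 h([L;R])[\dot{L};\dot{R}]}{[\dot{L};\dot{R}]} = 2\inner{D}{\dot{L}^T \dot{L} - \dot{R}^T \dot{R}} + 4\|L^T \dot{L} - R^T \dot{R}\|^2.
\]
The first term drops out because $D = 0$, and the second term becomes $4\|U^T \dot{L} - U^T \dot{R}\|^2 = 4\|U^T(\dot{L} - \dot{R})\|^2$, which is exactly the claimed expression.

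There is no genuine obstacle here: the lemma is purely a substitution of $L = R = U$ into (\ref{eq:hderiv}), and its role in the paper is to package this substitution for later use in the asymmetric analysis. If I wanted to be fully self-contained rather than citing (\ref{eq:hderiv}), I could alternatively derive the two identities from scratch by writing $h([L+t\dot{L}; R+t\dot{R}]) = \tfrac{1}{2}\|D + t(L^T\dot{L} + \dot{L}^T L - R^T\dot{R} - \dot{R}^T R) + t^2(\dot{L}^T\dot{L} - \dot{R}^T\dot{R})\|^2$ and reading off the coefficients of $t^0,t^1,t^2$ at $L = R = U$; but given that (\ref{eq:hderiv}) is already stated in the preliminaries, the one-line substitution argument above is the cleanest approach.
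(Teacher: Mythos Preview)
Your proposal is correct and is exactly how the paper handles this lemma: the paper offers no separate proof but simply introduces the statement with the sentence ``We also recall the derivatives of the balancing regularizer from (\ref{eq:hderiv}),'' and your substitution $L=R=U$, $D=0$ into (\ref{eq:hderiv}) is precisely the intended one-line justification.
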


We are now ready to verify that the asymmetric case $LR^{T}$ inherits
the same spurious local minimizer $L=R=U_{0}=\alpha Q_{2}$ from the
symmetric case by verifying the first-order necessary condition (\ref{eq:foc})
and the second-order sufficient condition (\ref{eq:socs}) .
\begin{lem}
\label{lem:asym}Suppose that \asmref{Anrm} holds. Then, the function
\[
g_{\lambda}:\R^{n\times r}\to\R,\qquad[L;R]\mapsto\frac{1}{2}\|\A(LR^{T}-Q_{1}Q_{1}^{T})\|^{2}+\frac{\lambda}{2}\|L^{T}L-R^{T}R\|^{2}
\]
with $\lambda\ge0$ satisfies at the point $U_{0}^{(2)}=\alpha[Q_{2};Q_{2}]$
\begin{gather*}
g_{\lambda}(U_{0}^{(2)})\ge\frac{r^{\star}}{2}(1-\delta^{2}),\quad\nabla g_{\lambda}(U_{0}^{(2)})\ge0,\quad\inner{\nabla g_{\lambda}(U_{0}^{(2)})}{U_{0}^{(2)}}=0,\\
\inner{\nabla^{2}g_{\lambda}(U_{0}^{(2)})[\dot{U}^{(2)}]}{\dot{U}^{(2)}}\ge\mu\|\dot{U}^{(2)}\|^{2}\text{ for all }\dot{U}^{(2)}\in\Cone(U_{0}^{(2)},\nabla g_{\lambda}(U_{0}^{(2)}))
\end{gather*}
where $\delta=\alpha^{2}\sqrt{r/r^{\star}+2\epsilon^{2}r}$ and $\mu=\min\{\lambda,\frac{1}{4}\}\cdot\alpha^{2}$.
\end{lem}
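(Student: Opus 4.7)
The plan is to reduce everything at $U_{0}^{(2)}$ to the already-established symmetric case (Lemma \lemref{sym}), via the transfer identities of Lemmas \lemref{sym2asym} and \lemref{bal}. Since \asmref{Anrm} implies \asmref{sym}, Lemma \lemref{sym2asym} applies. By Lemma \lemref{bal}, $\nabla h(U_{0}^{(2)})=0$, so Lemma \lemref{sym2asym} gives $\nabla g_{\lambda}(U_{0}^{(2)})=\frac{1}{2}[\nabla f(U_{0});\nabla f(U_{0})]$, and both first-order conditions for $g_{\lambda}$ at $U_{0}^{(2)}$ immediately inherit from those for $f$ at $U_{0}$ proved in Lemma \lemref{sym}. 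Consequently, the critical cone at $U_{0}^{(2)}$ factors as the Cartesian product $\Cone(U_{0},\nabla f(U_{0}))\times\Cone(U_{0},\nabla f(U_{0}))$.

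\textbf{Second-order decomposition.} For any critical-cone direction $[\dot L;\dot R]$, decompose into $\dot U=\frac{1}{2}(\dot L+\dot R)$ and $\dot V=\frac{1}{2}(\dot L-\dot R)$. The critical cone being convex and conic, $\dot U$ itself lies in $\Cone(U_{0},\nabla f(U_{0}))$; the only constraint inherited by $\dot V$ is that its top $r^{\star}$ rows vanish. Lemmas \lemref{sym2asym} and \lemref{bal} then express the Hessian form as
\[
\inner{\nabla^{2}f(U_{0})[\dot U]}{\dot U} - 2\inner{S_{U_{0}}}{\dot V\dot V^{T}} + \|\A(\dot V U_{0}^{T}-U_{0}\dot V^{T})\|^{2} + 4\lambda\|U_{0}^{T}(\dot L-\dot R)\|^{2},
\]
of which Lemma \lemref{sym} takes care of the first term ($\ge 2\alpha^{2}\|\dot U\|^{2}$).

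\textbf{Main obstacle and final bookkeeping.} The delicate step is showing that the two extra $\dot V$-dependent terms from Lemma \lemref{sym2asym} do not spoil positivity. For the cross term, the explicit block form of $S_{U_{0}}$ computed in Lemma \lemref{sym} is nonzero only on the $Q_{1}$-indexed rows/columns, on which $\dot V\dot V^{T}$ vanishes, so this inner product is $0$. For the squared norm, $Q^{T}(\dot V U_{0}^{T}-U_{0}\dot V^{T})Q$ is zero in its top-left $r^{\star}\times r^{\star}$ block, which kills the $\inner{C}{\cdot}$ factor of the rank-two perturbation in \asmref{Anrm} and collapses $\|\A(\cdot)\|^{2}$ down to the plain Frobenius norm. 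Writing $\dot V$ in block components $(0,Y_{V},Z_{V})$, this evaluates to $\alpha^{2}(\|Y_{V}-Y_{V}^{T}\|^{2}+2\|Z_{V}\|^{2})$, while the balancing term evaluates to $16\lambda\alpha^{2}\|Y_{V}\|^{2}$. Using $\|\dot U^{(2)}\|^{2}=2\|\dot U\|^{2}+2\|Y_{V}\|^{2}+2\|Z_{V}\|^{2}$ and comparing coefficients against $\mu\|\dot U^{(2)}\|^{2}$ finally requires $\mu\le\alpha^{2}$ (from the $\|\dot U\|^{2}$ and $\|Z_{V}\|^{2}$ components) and $\mu\le 8\lambda\alpha^{2}$ (from the $\|Y_{V}\|^{2}$ component), both of which are satisfied by $\mu=\min\{4\lambda,1\}\alpha^{2}$.
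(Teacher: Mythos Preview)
Your proposal is correct and follows essentially the same route as the paper: transfer first-order optimality from $f$ to $g_\lambda$ via Lemmas~\lemref{sym2asym} and~\lemref{bal}, split the Hessian form into the $\dot U$ and $\dot V$ parts, kill $\langle S_{U_0},\dot V\dot V^T\rangle$ by the disjoint block-sparsity, reduce $\|\A(\dot V U_0^T-U_0\dot V^T)\|^2$ to the plain Frobenius norm, and then balance coefficients. Your bookkeeping with $Y_V,Z_V$ (the blocks of $\dot V$) is the same computation as the paper's with $Y=2Y_V,\,Z=2Z_V$, and your explicit observation that $\langle C,\cdot\rangle=0$ is exactly what the paper uses implicitly when collapsing $\|\A(\cdot)\|^2$ to $\|\cdot\|^2$.
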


\begin{proof}
\sloppy Let $f(U)=\frac{1}{2}\|\A(UU^{T}-Q_{1}Q_{1}^{T})\|^{2}$.
It immediately follows from \lemref{sym}, \lemref{sym2asym}, and
\lemref{bal} that $g_{\lambda}$ satisfies at $U_{0}^{(2)}=[U_{0};U_{0}]$:
\begin{gather*}
g_{\lambda}(U_{0}^{(2)})\ge g_{0}(U_{0}^{(2)})=f_{0}(U_{0})\ge\frac{r^{\star}}{2}(1-\delta^{2}),\\
\nabla g_{\lambda}(U_{0}^{(2)})=\frac{1}{2}\begin{bmatrix}\nabla f(U_{0})\\
\nabla f(U_{0})
\end{bmatrix}\ge0,\quad\inner{\nabla g_{\lambda}(U_{0}^{(2)})}{U_{0}^{(2)}}=\frac{1}{2}\inner{\begin{bmatrix}\nabla f(U_{0})\\
\nabla f(U_{0})
\end{bmatrix}}{\begin{bmatrix}U_{0}\\
U_{0}
\end{bmatrix}}=0.
\end{gather*}
Moreover, this shows that the critical cone for $g_{\lambda}$ is
just the Cartesian product of two critical cones for $f$:
\[
\Cone(U_{0}^{(2)},\nabla g_{\lambda}(U_{0}^{(2)}))=\{[\dot{L};\dot{R}]:\dot{L},\dot{R}\in\Cone(U_{0},\nabla f(U_{0}))\}.
\]
For $\dot{L},\dot{R}\in\Cone(U_{0},\nabla f(U_{0}))$, note that $\dot{U}=\frac{1}{2}(\dot{L}+\dot{R})\in\Cone(U_{0},\nabla f(U_{0}))$
because this is a cone. However, the same is not necessarily true
of $\dot{V}=\frac{1}{2}(\dot{L}-\dot{R})$. Nevertheless, we recall
from the proof of \lemref{sym} that the sparsity patterns of $S=\AT\A(UU^{T}-M^{\star})$
and $\dot{L},\dot{R}$ are disjoint:
\[
S=\alpha^{2}\begin{bmatrix}\beta I & \epsilon11^{T} & 0\\
\epsilon11^{T} & 0 & 0\\
0 & 0 & 0
\end{bmatrix},\quad\dot{L}=\begin{bmatrix}0\\
Y_{L}\\
Z_{L}
\end{bmatrix},\quad\dot{R}=\begin{bmatrix}0\\
Y_{R}\\
Z_{R}
\end{bmatrix},
\]
so that $\inner S{\dot{V}\dot{V}^{T}}=0$. Moreover, writing $Y=Y_{L}-Y_{R}$
and $Z=Z_{L}-Z_{R}$, we have
\[
\|\A(\dot{V}U_{0}^{T}-U_{0}\dot{V}^{T})\|^{2}=\left\Vert \A\left(\begin{bmatrix}0 & 0 & 0\\
0 & \alpha(Y-Y^{T}) & -\alpha Z^{T}\\
0 & \alpha Z & 0
\end{bmatrix}\right)\right\Vert ^{2}\ge2\alpha^{2}\|Z\|^{2}.
\]
The balancing regularizer contributes the following term
\[
\inner{\nabla^{2}h(U_{0}^{(2)})[\dot{L};\dot{R}]}{[\dot{L};\dot{R}]}=4\|U_{0}^{T}(\dot{L}-\dot{R})\|^{2}=8\alpha^{2}\|Y\|^{2}.
\]
Finally, piecing everything together,
\begin{gather*}
\inner{\nabla^{2}g_{\lambda}(U^{(2)})[\dot{L};\dot{R}]}{[\dot{L};\dot{R}]}=\underbrace{\inner{\nabla^{2}f(U_{0})[\dot{U}]}{\dot{U}}}_{\ge2\alpha^{2}\|\dot{U}\|^{2}}\\
-2\underbrace{\inner S{\dot{V}\dot{V}^{T}}}_{=0}+\underbrace{\|\A(\dot{V}U_{0}^{T}-U_{0}\dot{V}^{T})\|^{2}}_{\ge2\alpha^{2}\|Z\|^{2}}+\lambda\underbrace{\inner{\nabla^{2}h(U_{0}^{(2)})[\dot{L};\dot{R}]}{[\dot{L};\dot{R}]}}_{=8\alpha^{2}\|Y\|^{2}}.
\end{gather*}
Substituting $\frac{1}{4}\|\dot{V}\|^{2}=\|Y\|^{2}+\|Z\|^{2}$ and
$\frac{1}{2}\|[\dot{L};\dot{R}]\|^{2}=\|\dot{U}\|^{2}+\|\dot{V}\|^{2}$
completes the proof. 
\end{proof}
To summarize, given $\epsilon>0$ and $0<\alpha<(r/r^{\star}+2\epsilon^{2}r)^{-1/4}$,
\lemref{check} verifies that the $\A:\R^{n\times n}\to\R^{n\times n}$
stated in \thmref{main} satisfies \asmref{Anrm}, and \lemref{delta}
ensures that $\A$ also satisfies (\ref{eq:rip}) with $\delta=\alpha^{2}\sqrt{r/r^{\star}+2\epsilon^{2}r}<1$.
Nevertheless, by \lemref{asym}, the second-order necessary condition
(\ref{eq:foc}), (\ref{eq:socn}) holds at a point $(L,R)=(U_{0},U_{0})$
with $g_{\lambda}(U_{0},U_{0})>g_{\lambda}(U_{\star},U_{\star})=0$,
so this is indeed a spurious second-order critical point. If additionally
$\lambda>0$, then the second-order sufficient condition (\ref{eq:foc})-(\ref{eq:socs})
holds, so $(L,R)=(U_{0},U_{0})$ is also a spurious strict local minimizer.
In either case, choosing $\alpha\to0^{+}$ also sets $\delta\to0^{+}$,
so we conclude that there is no $\delta>0$ that would eliminate all
spurious second-order critical points (resp. all spurious local minima). 

\section{Counterexamples for $\delta=0$ and any $r\ge r^{\star}>1$}\label{sec:counter2}

We begin with a basic counterexample in the fully observed symmetric
setting, which isolates a geometric failure caused solely by the nonnegativity
constraint. To ensure that the local minimum identified below is genuinely
spurious, we further require $r\ge r^{\star}=k+1$, or else the global
minimum of zero may not be attainable at $U=U_{\star}$. This construction
also explains why the rank-$1$ case is special; the point $U_{0}$
stated below with $k=0$ is actually globally optimal, and this is
consistent with benign behavior for $r^{\star}=1$.
\begin{lem}
Pick $m>r\ge1$ and $k\ge1$. The function 
\[
f:\R_{+}^{(m+k)\times r}\to\R,\;U\mapsto\frac{1}{2}\|UU^{T}-U_{\star}U_{\star}^{T}\|^{2}
\]
 admits a local minimum at $U_{0}$, where
\begin{equation}
U_{0}=\begin{bmatrix}\frac{1}{\sqrt{r}}\one_{m}\one_{r}^{T}\\
0_{k\times r}
\end{bmatrix},\quad U_{\star}=\begin{bmatrix}\one_{m} & 0\\
0 & I_{k}
\end{bmatrix}.\label{eq:Udefs}
\end{equation}
\end{lem}

\begin{proof}
Partition each $U=\begin{bmatrix}Y\\
Z^{T}
\end{bmatrix}$ for $Y\in\R_{+}^{m\times r}$ and $Z\in\R_{+}^{n\times k}$, and
observe that 
\begin{align*}
f(U)-f(U_{0})= & \frac{1}{2}\|UU^{T}-U_{\star}U_{\star}^{T}\|^{2}-\frac{1}{2}\|U_{0}U_{0}^{T}-U_{\star}U_{\star}^{T}\|^{2}\\
= & \frac{1}{2}\left(\|YY^{T}-\one\one^{T}\|^{2}+2\|YZ\|^{2}+\|Z^{T}Z-I_{k}\|^{2}\right)-\frac{1}{2}\|I_{k}\|^{2}\\
= & \frac{1}{2}\left(\|YY^{T}-\one\one^{T}\|^{2}+\|Z^{T}Z-I_{k}\|^{2}\right)+\|YZ\|^{2}-\|Z\|^{2}\\
\ge & \|YZ\|^{2}-\|Z\|^{2}.
\end{align*}
At $U=U_{0}$, we have $Y=Y_{0}=\frac{1}{\sqrt{r}}\one_{m}\one_{r}^{T}$,
and hence
\begin{align}
\|YZ\|^{2}=\|\frac{1}{\sqrt{r}}\one_{m}\one_{r}^{T}Z\|^{2}=\frac{m}{r}\|\one_{r}^{T}Z\|^{2} & =\frac{m}{r}\sum_{i=1}^{k}\|Ze_{i}\|_{\ell_{1}}^{2}\text{ since }Z_{i,j}\ge0\nonumber \\
 & \ge\frac{m}{r}\sum_{i=1}^{k}\|Ze_{i}\|_{\ell_{2}}^{2}=\frac{m}{r}\|Z\|^{2}.\label{eq:YZbnd}
\end{align}
Within the neighborhood $\|U-U_{0}\|\le\rho=\frac{1}{4r\sqrt{m}}$,
we have
\begin{align}
\|Y^{T}Y-Y_{0}^{T}Y_{0}\| & =\|\mathrm{sym}[(Y-Y_{0})(Y+Y_{0})^{T}]\|\le\|Y-Y_{0}\|\|Y+Y_{0}\|\nonumber \\
 & \le\rho(2\|Y_{0}\|+\rho)=\rho(2\sqrt{m}+\rho)\le\frac{1}{r}\quad\text{ for }m,r\ge1.\label{eq:radiusbnd}
\end{align}
Combined, the local suboptimality is lower-bounded
\begin{align*}
f(U)-f(U_{0}) & \ge\|YZ\|^{2}-\|Z\|^{2}=\inner{Y^{T}Y-I_{r}}{ZZ^{T}}\\
 & \ge\|Y_{0}Z\|^{2}-\|Z\|^{2}-\|Y^{T}Y-Y_{0}^{T}Y_{0}\|\|Z\|^{2}\\
 & \ge\frac{1}{r}(m-r-1)\|Z\|^{2}\ge0\quad\text{ for integer }m>r\ge1,
\end{align*}
so we conclude that $U_{0}$ is a local minimum within this neighborhood.
\end{proof}
The argument extends directly to the asymmetric factorization. To
keep the notation light, we drop the factor $\frac{1}{2}$ from the
definition of $g$, since it plays no role in the local analysis. 
\begin{lem}
Pick $m>r\ge1$ and $k\ge1$. The function 
\[
g:\R_{+}^{(m+k)\times r}\times\R_{+}^{(m+k)\times r}\to\R,\;(L,R)\mapsto\|LR^{T}-U_{\star}U_{\star}^{T}\|^{2}
\]
admits a local minimum at $L=R=U_{0}$, where $U_{0},U_{\star}$ are
the same as (\ref{eq:Udefs}).
\end{lem}

\begin{proof}
Using the same partition of $U$ for both $L$ and $R$, we have $g(U_{0},U_{0})=\|I_{k}\|^{2}$
and
\begin{align*}
g(L,R)-g(U_{0},U_{0})= & \|LR^{T}-U_{\star}U_{\star}^{T}\|^{2}-\|U_{0}U_{0}^{T}-U_{\star}U_{\star}^{T}\|^{2}\\
= & \left(\|Y_{L}Y_{R}^{T}-\one\one^{T}\|^{2}+\|Y_{L}Z_{R}\|^{2}+\|Y_{R}Z_{L}\|^{2}+\|Z_{L}^{T}Z_{R}-I_{k}\|^{2}\right)-\|I_{k}\|^{2}\\
= & \left(\|Y_{L}Y_{R}^{T}-\one\one^{T}\|^{2}+\|Z_{L}^{T}Z_{R}\|^{2}\right)+\|Y_{L}Z_{R}\|^{2}+\|Y_{R}Z_{L}\|^{2}-2\inner{Z_{L}}{Z_{R}}\\
\ge & \|Y_{L}Z_{R}\|^{2}+\|Y_{R}Z_{L}\|^{2}-2\inner{Z_{L}}{Z_{R}}\\
\ge & \left(\|Y_{L}Z_{R}\|^{2}-\|Z_{R}\|^{2}\right)+\left(\|Y_{R}Z_{L}\|^{2}-\|Z_{L}\|^{2}\right)\\
= & \inner{Y_{L}^{T}Y_{L}-I}{Z_{R}Z_{R}^{T}}+\inner{Y_{R}^{T}Y_{R}-I}{Z_{L}Z_{L}^{T}}
\end{align*}
The second inequality is because $\|Z_{L}+Z_{R}\|^{2}=\|Z_{L}\|^{2}+\|Z_{R}\|^{2}-2\inner{Z_{L}}{Z_{R}}\ge0$.
In analogy to (\ref{eq:YZbnd}), at $L=R=U_{0}$, we have $Y_{L}=Y_{R}=Y_{0}=\frac{1}{\sqrt{r}}\one_{m}\one_{r}^{T}$,
and hence
\[
\inner{Y_{0}^{T}Y_{0}-I}{Z_{L}Z_{L}^{T}+Z_{R}Z_{R}^{T}}\ge(\frac{m}{r}-1)(\|Z_{L}\|^{2}+\|Z_{R}\|^{2}).
\]
In analogy to (\ref{eq:radiusbnd}), within the neighborhood $\|(L,R)-(U_{0},U_{0})\|\le\rho=\frac{1}{4r\sqrt{m}}$,
we have
\[
\max\{\|Y_{L}^{T}Y_{L}-Y_{0}^{T}Y_{0}\|,\|Y_{R}^{T}Y_{R}-Y_{0}^{T}Y_{0}\|\}\le\frac{1}{r}\quad\text{ for }m,r\ge1.
\]
Combined, the local suboptimality is lower-bounded
\begin{align*}
g(L,R)-g(U_{0},U_{0}) & \ge\inner{Y_{L}^{T}Y_{L}-I}{Z_{R}Z_{R}^{T}}+\inner{Y_{R}^{T}Y_{R}-I}{Z_{L}Z_{L}^{T}}\\
 & \ge(\frac{m}{r}-1-\frac{1}{r})\|Z_{R}\|^{2}+(\frac{m}{r}-1-\frac{1}{r})\|Z_{L}\|^{2}\\
 & \ge0\quad\text{ for integer }m>r\ge1,
\end{align*}
so we conclude that $(L,R)=(U_{0},U_{0})$ is a local minimizer within
the neighborhood.
\end{proof}
Finally, we show that this spurious local minimum persists after adding
the standard balancing regularizer used to eliminate scaling ambiguity
in asymmetric factorizations. 
\begin{lem}
Let $g_{\lambda}(L,R)=g_{0}(L,R)+\frac{\lambda}{2}\|L^{T}L-R^{T}R\|^{2}$
for $g_{0}:\R_{+}^{n\times r}\times\R_{+}^{n\times r}\to\R$ and $\lambda\ge0$.
If $(L_{0},R_{0})$ with $L_{0}^{T}L_{0}=R_{0}^{T}R_{0}$ is a local
minimizer for $g_{0}$, then it is also a local minimizer for $g_{\lambda}$.
\end{lem}

\begin{proof}
By definition, there exists a neighborhood $\|(L,R)-(L_{0},R_{0})\|\le\rho$
such that $g_{0}(L,R)\ge g_{0}(L_{0},R_{0})$. Within that same neighborhood,
$g_{\lambda}(L,R)\ge g_{0}(L,R)\ge g_{0}(L_{0},R_{0})=g_{\lambda}(L_{0},R_{0}).$
\end{proof}

\section{No spurious local minima for $\delta=0$ and $r^{\star}=1$}\label{sec:nospu}

\subsection{Symmetric factorization}

To prove that $f(U)=\frac{1}{2}\|UU^{T}-u_{\star}u_{\star}^{T}\|^{2}$
for $u_{\star}\in\R_{+}^{n}$ has no spurious local minima, we first
show that every first-order critical point $U$ satisfying
\[
U\ge0,\quad\nabla f(U)\ge0,\quad\inner{\nabla f(U)}U=0
\]
is either globally optimal, or it is zero. This is clearly implied
by the lemma below, since $u_{\star}\ge0$ and therefore $u_{\star}u_{\star}^{T}U\ge0$. 
\begin{lem}
\label{lem:nofirst}If $\inner{\nabla f(U)}{u_{\star}u_{\star}^{T}U}\ge0$
and $\inner{\nabla f(U)}U\le0$, then either $U=0$ or $UU^{T}=u_{\star}u_{\star}^{T}$.
\end{lem}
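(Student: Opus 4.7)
The plan is to reduce everything to scalar inequalities about the PSD matrix $P = UU^T$ and the vector $u_\star$. First, using $\nabla f(U) = 2(P - u_\star u_\star^T)U$ and cyclic trace manipulations, I would translate the two hypotheses into
\[
\tr(P^2) \le u_\star^T P u_\star, \qquad \|Pu_\star\|^2 \ge \|u_\star\|^2\,(u_\star^T P u_\star).
\]
Abbreviating $a = \|u_\star\|^2$, $b = u_\star^T P u_\star$, $c = \|Pu_\star\|^2$, and $d = \tr(P^2)$, these become simply $d \le b$ and $c \ge ab$, while the desired conclusion is $P = 0$ or $P = u_\star u_\star^T$.

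Next, I would diagonalize $P = \sum_i \lambda_i v_i v_i^T$ with $\lambda_i \ge 0$ and expand $u_\star$ in the eigenbasis, setting $\mu_i = \inner{u_\star}{v_i}^2 \ge 0$, so that $a = \sum_i \mu_i$, $b = \sum_i \lambda_i \mu_i$, $c = \sum_i \lambda_i^2 \mu_i$, and $d = \sum_i \lambda_i^2$. The key observation is the elementary bound $c = \sum_i \lambda_i^2 \mu_i \le (\max_i \mu_i)\,d$. Chaining it with $d \le b$ (from the second hypothesis) and $c \ge ab$ (from the first hypothesis) yields the scalar inequality $ab \le (\max_i \mu_i)\,b$.

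The two conclusions then fall out by case analysis. If $b = 0$, then $d \le 0$ together with positive semidefiniteness forces $P = 0$, hence $U = 0$. Otherwise $b > 0$, and dividing gives $a \le \max_i \mu_i$; but $\max_i \mu_i \le \sum_i \mu_i = a$ always holds, so the inequality is saturated, meaning $\mu_k = a$ for some single index $k$ and $\mu_j = 0$ for $j \ne k$. This pins down $u_\star = \pm\sqrt{a}\,v_k$, so $b = \lambda_k a$, and the remaining condition $\sum_i \lambda_i^2 = d \le b = \lambda_k a$ forces $\lambda_k = a$ and $\lambda_j = 0$ for $j \ne k$. Hence $P = a\,v_k v_k^T = u_\star u_\star^T$.

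The hard part is spotting the coupling: the test direction $u_\star u_\star^T U$ in the first hypothesis controls $c$ from below by $ab$, while the second hypothesis bounds $d$ above by $b$, and these two bounds only meet through the elementary spectral inequality $c \le (\max_i \mu_i)\,d$. Once that bridge is in place, the rest is routine manipulation of nonnegative scalars.
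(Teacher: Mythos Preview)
Your argument is correct and takes a genuinely different route from the paper. The paper never diagonalizes: writing $v = U^T u_\star$, it uses the crude Rayleigh-type bound $\langle vv^T, U^TU\rangle \le \|v\|^2\,\|UU^T\|$ (in your notation, $c \le b\sqrt{d}$) in place of your spectral inequality $c \le (\max_i \mu_i)\,d$, and then directly expands
\[
\|UU^T - u_\star u_\star^T\|^2 = d + a^2 - 2b \le a^2 - d \le 0
\]
once it has established $b \ge d$ and $\sqrt{d} \ge a$. This is shorter and never touches the eigenstructure of $P$. Your approach, by contrast, actually identifies $u_\star$ as an eigenvector of $UU^T$ with eigenvalue $\|u_\star\|^2$, which is more structural information than the lemma asks for but makes the equality case transparent.

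One small gap in your last step: the inequality $d \le \lambda_k a$ alone does \emph{not} force $\lambda_k = a$ and $\lambda_j = 0$ (take $\lambda_k = 1$, $a = 2$, all other $\lambda_j = 0$). You need to note that once $\max_i \mu_i = a$, the entire chain $ab \le c \le a d \le ab$ collapses to equalities; in particular $c = ab$ reads $\lambda_k^2 a = \lambda_k a^2$, whence $\lambda_k = a$, and then $d = a^2 = \lambda_k^2$ kills the remaining eigenvalues. This is a one-line fix, but as written the sentence does not stand on its own.
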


\begin{proof}
Define $v=U^{T}u_{\star}\ge0$, and observe that 
\[
0\;\le\;\half\inner{\nabla f(U)}{u_{\star}v^{T}}=\inner{u_{\star}v^{T}}{(UU^{T}-u_{\star}u_{\star}^{T})U}=\inner{vv^{T}}{U^{T}U}-\inner{v^{T}v}{u_{\star}^{T}u_{\star}},
\]
 and therefore 
\[
\|v\|^{2}\|UU^{T}\|\ge\inner{vv^{T}}{U^{T}U}\ge\inner{v^{T}v}{u_{\star}^{T}u_{\star}}=\|v\|^{2}\|u_{\star}u_{\star}^{T}\|.
\]
Next, we observe that 
\[
\frac{1}{2}\inner{\nabla f(U)}U=\inner U{(UU^{T}-u_{\star}u_{\star}^{T})U}=\|UU^{T}\|^{2}-\|v\|^{2}\le0.
\]
If $U\ne0$, then $\|v\|\ge\|UU^{T}\|\ne0$, and hence $\|UU^{T}\|\ge\|u_{\star}u_{\star}^{T}\|$.
Piecing everything together, we have
\begin{align*}
\|UU^{T}-u_{\star}u_{\star}^{T}\|^{2} & =\|UU^{T}\|^{2}+\|u_{\star}u_{\star}^{T}\|^{2}-2\|v\|^{2}\\
 & \le\|u_{\star}u_{\star}^{T}\|^{2}-\|UU^{T}\|^{2}\le0.
\end{align*}
\end{proof}
Next, we prove that $U=0$ is never a second-order critical point,
except in the trivial case when $u_{\star}=0$. Therefore, we conclude
that all second-order critical points of $f$ are always globally
optimal. 
\begin{lem}
If $u_{\star}\ne0$, then $\inner{\nabla^{2}f(0)[\dot{U}]}{\dot{U}}<0$
for some $\dot{U}\in\Cone(0,\nabla f(0))$.
\end{lem}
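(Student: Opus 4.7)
The plan is to evaluate the gradient and Hessian of $f$ at $U=0$ explicitly using the formulas in (\ref{eq:fderiv}), and then exhibit a simple nonnegative direction $\dot{U}$ along which the Hessian quadratic form is strictly negative. Since $\delta=0$ means $\A^T\A=I$, we have $S_{U}=UU^{T}-u_{\star}u_{\star}^{T}$, which at $U=0$ reduces to $S_{0}=-u_{\star}u_{\star}^{T}$. Because $\nabla f(U)=(S_{U}+S_{U}^{T})U$, the gradient vanishes at $U=0$, so the critical cone is the entire nonnegative orthant:
\[
\Cone(0,\nabla f(0))=\R_{+}^{n\times r}.
\]

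Next I would compute the Hessian quadratic form. Using (\ref{eq:fderiv}) at $U=0$, the term $\|\A(U\dot{U}^{T}+\dot{U}U^{T})\|^{2}$ vanishes, and only the curvature from $S_{0}$ remains:
\[
\inner{\nabla^{2}f(0)[\dot{U}]}{\dot{U}}=2\inner{S_{0}}{\dot{U}\dot{U}^{T}}=-2\inner{u_{\star}u_{\star}^{T}}{\dot{U}\dot{U}^{T}}=-2\|\dot{U}^{T}u_{\star}\|^{2}.
\]

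Finally I would pick a witness direction. Since $u_{\star}\ne0$ and $u_{\star}\ge0$, take $\dot{U}=u_{\star}\e_{1}^{T}\in\R_{+}^{n\times r}$, which lies in the critical cone. Then $\dot{U}^{T}u_{\star}=\|u_{\star}\|^{2}\e_{1}\ne0$, so the quadratic form evaluates to $-2\|u_{\star}\|^{4}<0$, completing the proof. There is no real obstacle here: the only subtlety is noticing that the critical cone at $U=0$ is unconstrained (the full nonnegative orthant) precisely because the gradient vanishes, which is what allows us to freely choose the nonnegative direction $u_{\star}\e_{1}^{T}$ aligned with $u_{\star}$ itself.
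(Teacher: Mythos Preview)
Your proof is correct and takes essentially the same approach as the paper: both observe that $\nabla f(0)=0$ so the critical cone is all of $\R_{+}^{n\times r}$, both choose the witness direction $\dot{U}=u_{\star}\e_{1}^{T}$, and both arrive at the value $-2\|u_{\star}\|^{4}$. Your write-up simply adds a bit more detail in deriving the general Hessian expression at $U=0$ before specializing.
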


\begin{proof}
Note that $\nabla f(0)=0$, and therefore $\Cone(0,\nabla f(0))=\R_{+}^{n\times r}$.
Then, $\dot{U}=u_{\star}\e_{1}^{T}\ge0$ yields $\inner{\nabla^{2}f(0)[\dot{U}]}{\dot{U}}=2\inner{-u_{\star}u_{\star}^{T}}{(u_{\star}\e_{1}^{T})(u_{\star}\e_{1}^{T})^{T}}=-2\|u_{\star}\|^{2}\|\dot{U}\|^{2}.$
\end{proof}

\subsection{Asymmetric factorization}

Now we move to the asymmetric case. To prove for $U=[L;R]\in\R_{+}^{n\times r}$
and $u_{\star}=[a;b]\in\R_{+}^{n}$ with $\|a\|=\|b\|$ that 
\[
g_{\lambda}(U)=\frac{1}{2}\|LR^{T}-ab^{T}\|^{2}+\frac{\lambda}{2}\|L^{T}L-R^{T}R\|^{2}
\]
has no spurious local minima when $\lambda=1/4$, we will again show
that every first-order critical point $U$ is either globally optimal,
or it is zero. We prove this by relating the first-order critical
points of $g_{\lambda}$ to first-order critical points of the symmetric
function $f(U)=\frac{1}{2}\|UU^{T}-u_{\star}u_{\star}^{T}\|^{2}$,
and then evoking \lemref{nofirst} for the symmetric case. 
\begin{lem}
If $U=[L;R]\ge0$ satisfies $\nabla g_{\lambda}(U)\ge0$ and $\inner{\nabla g_{\lambda}(U)}U=0$
for $\lambda=1/4$, then it also satisfies $\inner{\nabla f(U)}{u_{\star}u_{\star}^{T}U}\ge0$
and $\inner{\nabla f(U)}U\le0$. 
\end{lem}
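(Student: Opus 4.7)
The plan is to derive both inequalities directly from the KKT conditions $\nabla g_{1/4}(U) \ge 0$ and $\inner{\nabla g_{1/4}(U)}{U} = 0$, pairing them against explicit expansions of $\inner{\nabla f(U)}{U}$ and $\inner{\nabla f(U)}{u_\star u_\star^T U}$. Throughout I will abbreviate $p = L^T a \ge 0$, $q = R^T b \ge 0$, $v = p + q = U^T u_\star \ge 0$, and $c = \|a\| = \|b\|$, with all nonnegativity inherited from $U \ge 0$ and $u_\star \ge 0$.

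For the bound $\inner{\nabla f(U)}{U} \le 0$, I first use (\ref{eq:fderiv}) to obtain the direct expansion $\inner{\nabla f(U)}{U} = 2\|UU^T\|^2 - 2\|v\|^2$ with $\|v\|^2 = \|p\|^2 + 2 p^T q + \|q\|^2$. A parallel expansion of $\inner{\nabla g_{1/4}(U)}{U}$ via (\ref{eq:gderiv})--(\ref{eq:hderiv}), simplified using $\inner{L^TL}{R^TR} = \|LR^T\|^2$, should collapse to $\tfrac{1}{2}\|UU^T\|^2 - 2 p^T q$: the choice $\lambda = 1/4$ is precisely what makes the regularizer's contribution $\tfrac{\lambda}{2}\|L^TL - R^TR\|^2$ combine with the cross terms of $g$ into this single multiple of $\|UU^T\|^2$. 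Imposing $\inner{\nabla g_{1/4}(U)}{U} = 0$ therefore produces the key identity $\|UU^T\|^2 = 4 p^T q$, and substituting back yields
\[
\inner{\nabla f(U)}{U} \;=\; 8 p^T q - 2\|p\|^2 - 4 p^T q - 2\|q\|^2 \;=\; -2\|p - q\|^2 \;\le\; 0.
\]

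For the bound $\inner{\nabla f(U)}{u_\star u_\star^T U} \ge 0$, I will first expand the left-hand side directly into $2 v^T(L^TL + R^TR) v - 4 c^2 \|v\|^2$, using $UU^T u_\star = U v$ and $\|u_\star\|^2 = 2 c^2$. The required lower bound on the quadratic form $v^T(L^TL + R^TR) v$ will come from testing $\nabla g_{1/4}(U) \ge 0$ against the nonnegative direction $u_\star u_\star^T U = u_\star v^T$: writing $\nabla g_{1/4}(U)$ in its $[L;R]$-block form, this test yields $0 \le a^T \nabla_L g_{1/4}(U)\, v + b^T \nabla_R g_{1/4}(U)\, v$, and the same $\lambda = 1/4$ cancellation should reduce the right-hand side to $\tfrac{1}{2} v^T(L^TL + R^TR) v - c^2 \|v\|^2$, giving $v^T(L^TL + R^TR) v \ge 2 c^2 \|v\|^2$ and hence the claim.

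The main obstacle will be arranging the algebra so that the identity $\|UU^T\|^2 = 4 p^T q$ emerges cleanly from first-order stationarity: only at $\lambda = 1/4$ does the regularizer's coefficient exactly offset the mismatch between $\|UU^T\|^2$ and $2\|LR^T\|^2$, and at any other value of $\lambda$ a residual $\|L^TL - R^TR\|^2$ term would survive that cannot be controlled by first-order conditions alone.
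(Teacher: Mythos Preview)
Your proposal is correct. Both inner products work out exactly as you sketch: the stationarity condition $\inner{\nabla g_{1/4}(U)}{U}=0$ does collapse to $\|UU^{T}\|^{2}=4p^{T}q$ once you use $\|L^{T}L-R^{T}R\|^{2}=\|UU^{T}\|^{2}-4\|LR^{T}\|^{2}$, and testing $\nabla g_{1/4}(U)\ge0$ against $u_{\star}v^{T}$ does produce $\frac{1}{2}v^{T}(L^{T}L+R^{T}R)v-c^{2}\|v\|^{2}\ge0$.

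The paper takes a more structural route. Writing $J=\diag(I_{n_{1}},-I_{n_{2}})$, it first establishes the single gradient identity
\[
\nabla g_{1/4}(U)=\tfrac{1}{4}\,\nabla f(U)+\tfrac{1}{2}\,Ju_{\star}u_{\star}^{T}JU,
\]
and then observes that the extra term satisfies $\inner{Ju_{\star}u_{\star}^{T}JU}{u_{\star}u_{\star}^{T}U}=0$ and $\inner{Ju_{\star}u_{\star}^{T}JU}{U}=\|U^{T}Ju_{\star}\|^{2}\ge0$, both consequences of $u_{\star}^{T}Ju_{\star}=\|a\|^{2}-\|b\|^{2}=0$. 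Both inequalities then drop out immediately. Your computation is really the same identity unpacked in coordinates: your residual $-2\|p-q\|^{2}$ is exactly $-2\|U^{T}Ju_{\star}\|^{2}$, and your test direction $u_{\star}v^{T}$ is $u_{\star}u_{\star}^{T}U$. The paper's formulation makes the role of $\lambda=1/4$ and of the balance $\|a\|=\|b\|$ more transparent and handles both conclusions with one stroke; your direct expansion is more elementary and avoids introducing $J$, at the cost of two separate calculations.
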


\begin{proof}
Let $J=\diag(I_{n_{1}},-I_{n_{2}})$ so that $JU=[L;-R]$. Define
the error matrix $E=UU^{T}-u_{\star}u_{\star}^{T}$ and partition
it into $2\times2$ blocks so that $E_{12}=E_{21}^{T}=LR^{T}-ab^{T}$.
We observe that, for $\lambda=1/4$, the gradients of $g_{\lambda}$
and $f$ are related
\begin{align*}
\nabla g_{\lambda}(U) & =\left(\begin{bmatrix}0 & E_{12}\\
E_{12}^{T} & 0
\end{bmatrix}+\frac{1}{2}\begin{bmatrix}L\\
-R
\end{bmatrix}\begin{bmatrix}L\\
-R
\end{bmatrix}^{T}\right)\begin{bmatrix}L\\
R
\end{bmatrix}\\
 & =\left(\begin{bmatrix}0 & E_{12}\\
E_{12}^{T} & 0
\end{bmatrix}+\frac{1}{2}\begin{bmatrix}E_{11} & -E_{12}\\
-E_{12}^{T} & E_{22}
\end{bmatrix}+\frac{1}{2}\begin{bmatrix}a\\
-b
\end{bmatrix}\begin{bmatrix}b\\
-b
\end{bmatrix}^{T}\right)\begin{bmatrix}L\\
R
\end{bmatrix}\\
 & =\left(\frac{1}{2}\begin{bmatrix}E_{11} & E_{12}\\
E_{12}^{T} & E_{22}
\end{bmatrix}+\frac{1}{2}\begin{bmatrix}a\\
-b
\end{bmatrix}\begin{bmatrix}b\\
-b
\end{bmatrix}^{T}\right)\begin{bmatrix}L\\
R
\end{bmatrix}\\
 & =\frac{1}{4}\nabla f(U)+\frac{1}{2}Ju_{\star}u_{\star}^{T}JU.
\end{align*}
Regarding the extra term, note that $u_{\star}^{T}Ju_{\star}=0$ by
definition, and therefore
\begin{gather*}
\inner{Ju_{\star}u_{\star}^{T}JU}{u_{\star}u_{\star}^{T}U}=\inner{u_{\star}^{T}Ju_{\star}}{u_{\star}^{T}UU^{T}Ju_{\star}}=0,\\
\inner{Ju_{\star}u_{\star}^{T}JU}U=\|U^{T}Ju_{\star}\|\ge0.
\end{gather*}
Hence, if $\inner{\nabla g_{\lambda}(U)}{u_{\star}u_{\star}^{T}U}\ge0$
then $\inner{\nabla f(U)}{u_{\star}u_{\star}^{T}U}\ge0$, and if $\inner{\nabla g_{\lambda}(U)}U=0$
then $\inner{\nabla f(U)}U\le0$. 
\end{proof}
Again, we prove that $U=0$ is never a second-order critical point
for $g_{\lambda}$, except in the trivial case when $u_{\star}=0$.
Therefore, we conclude that all second-order critical points of $g_{\lambda}$
for $\lambda=1/4$ are always globally optimal. 
\begin{lem}
If $u_{\star}\ne0$, then $\inner{\nabla^{2}g_{\lambda}(0)[\dot{U}]}{\dot{U}}<0$
for some $\dot{U}\in\Cone(0,\nabla g_{\lambda}(0))$.
\end{lem}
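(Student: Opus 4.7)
The plan is to imitate the symmetric argument used one lemma earlier: evaluate $\nabla g_\lambda$ and $\nabla^2 g_\lambda$ at the origin, observe that the critical cone degenerates to the full nonnegative orthant, and then exhibit an explicit feasible direction that makes the Hessian quadratic form negative.

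First I would observe that $\nabla g_\lambda(0)=0$. From~(\ref{eq:gderiv}) with $L=R=0$ we have $\nabla g(0)=0$, and from~(\ref{eq:hderiv}) with $L=R=0$ the balancing term gives $D=0$, hence $\nabla h(0)=0$. Consequently the critical cone is the entire nonnegative orthant
\[
\Cone(0,\nabla g_\lambda(0))=\R_+^{n\times r},
\]
so \emph{any} nonnegative perturbation $\dot U=[\dot L;\dot R]\ge 0$ is admissible.

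Next I would compute the Hessian quadratic form at the origin. Using~(\ref{eq:gderiv}) at $L=R=0$ the ``curvature'' term $\|\A(L\dot R^T+\dot L R^T)\|^2$ vanishes, leaving $\inner{\nabla^2 g(0)[\dot U]}{\dot U}=2\inner{S_{L,R}}{\dot L\dot R^T}=-2\inner{ab^T}{\dot L\dot R^T}$ since here $\A=I$ and $S_{L,R}=-ab^T$. Similarly, by~(\ref{eq:hderiv}) with $L=R=0$ both terms $2\inner D{\dot L^T\dot L-\dot R^T\dot R}$ and $4\|L^T\dot L-R^T\dot R\|^2$ vanish, so $h$ contributes nothing to second order. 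The Hessian therefore reduces to
\[
\inner{\nabla^{2}g_\lambda(0)[\dot U]}{\dot U}=-2\inner{ab^T}{\dot L\dot R^T}=-2\,a^T\dot L\dot R^Tb.
\]

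Finally, I would exhibit the witness. Since $u_\star=[a;b]\ne 0$ and $\|a\|=\|b\|$, both $a\ge 0$ and $b\ge 0$ are nonzero. Choose $\dot L=a\e_1^T$ and $\dot R=b\e_1^T$, which lie in $\R_+^{n_1\times r}$ and $\R_+^{n_2\times r}$ respectively, hence $\dot U\in\Cone(0,\nabla g_\lambda(0))$. A direct computation gives $\dot L\dot R^T=ab^T$ and therefore
\[
\inner{\nabla^{2}g_\lambda(0)[\dot U]}{\dot U}=-2\,a^T(ab^T)b=-2\|a\|^{2}\|b\|^{2}<0,
\]
as required. The argument is essentially free of obstacles: the only point worth double-checking is that the balancing regularizer really contributes zero at the origin (because both $D$ and $L^T\dot L-R^T\dot R$ vanish there), so the sign of the Hessian is determined entirely by the bilinear indefinite cross term $-2\inner{ab^T}{\dot L\dot R^T}$, which is easily driven negative within the positive orthant.
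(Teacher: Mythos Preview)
Your proposal is correct and essentially identical to the paper's own proof: both note that $\nabla g_\lambda(0)=0$ so the critical cone is all of $\R_+^{n\times r}$, then take the same witness $\dot U=u_\star\e_1^T$ (i.e.\ $\dot L=a\e_1^T$, $\dot R=b\e_1^T$) to obtain $\inner{\nabla^2 g_\lambda(0)[\dot U]}{\dot U}=-2\|a\|^2\|b\|^2<0$, using $\|a\|=\|b\|$ to conclude both factors are nonzero. Your extra remarks about the balancing regularizer vanishing at the origin are just helpful elaboration, not a different route.
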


\begin{proof}
Note that $\nabla g_{\lambda}(0)=0$, and therefore $\Cone(0,\nabla g_{\lambda}(0))=\R_{+}^{n\times r}$.
Then, $\dot{U}=u_{\star}\e_{1}^{T}\ge0$ yields $\inner{\nabla^{2}g_{\lambda}(0)[\dot{U}]}{\dot{U}}=\inner{\begin{bmatrix}0 & -ab^{T}\\
-ba^{T} & 0
\end{bmatrix}}{(u_{\star}\e_{1}^{T})(u_{\star}\e_{1}^{T})^{T}}=-2\|ab^{T}\|^{2}.$ Finally, since $\|a\|=\|b\|$ holds by assumption, we have $\|ab^{T}\|=\|a\|\|b\|=\frac{1}{2}\|u_{\star}\|^{2}=\frac{1}{2}\|\dot{U}\|^{2}$.
\end{proof}

\section{Conclusions and future work}\label{sec:conclu}

This paper shows that the benign nonconvex geometry long associated
with RIP-based low-rank recovery is not robust to the introduction
of nonnegativity constraints. Although a single fully observed rank-1
case remains benign, this property breaks down under arbitrarily small
partial observation, higher-rank ground truths, and overparameterization,
with explicit spurious local minima appearing at the boundary. These
failures occur in settings that are computationally trivial and easily
solvable by inspection, so they must stem from the geometry of nonnegative
factorization itself rather than algorithmic hardness. As a result,
the standard stability-based explanation that successfully unifies
much of unconstrained low-rank recovery does not extend to the nonnegative
setting. Understanding the empirical success of projected and constrained
methods in nonnegative low-rank recovery applications such as clustering
therefore requires new analytical frameworks that go beyond existing
stability-based benign landscape arguments.

Below, we outline some promising directions of future research.

\paragraph{Small-but-positive elements and apocalypses}

A key component of our construction in \thmref{main} is to induce
a spurious local minimizer $U_{0}$ that contains both exactly zero
elements and also small-but-positive elements. This construction takes
advantage of the phase transition between zero and near-zero entries,
and is closely reminescent of another recent work on so-called apocalypses
in the set of rank-bound matrices~\cite{levin2023finding}. This
connection suggests explicitly thresholding the elements of the factor
matrix $U,L,R$ as a potential safeguard, that might preserve benign
nonconvexity by preventing an algorithm from becoming trapped in shallow
but combinatorial regions. Developing such safeguarded algorithms
or identifying the conditions under which they restore benign nonconvexity
could be a fruitful research direction.

\paragraph{Algorithmic implications and approximate stationarity.}

\noindent In the benign $\delta=0,r^{\star}=1$ regime, we show that
all \emph{exact} constrained second-order stationary points are globally
optimal (and that this fails for $\delta>0$ or $r^{\star}>1$). To
say that standard algorithms such as gradient flow or gradient descent
converge to the global minimum, however, would require the stronger
\emph{strict saddle} property, namely that every $\epsilon$-second-order
stationary point lies $O(\epsilon)$-close to a global optimum. Assuming
this holds, projected gradient descent (and similar methods) can be
equipped with saddle-escape mechanisms to ensure convergence to $\epsilon$-second-order
points~\cite{nouiehed2018convergence,mokhtari2018escaping,nouiehed2020trust}.
On the other hand, in the constrained setting, randomly initialized
projected gradient descent can get stuck at a saddle point with positive
probability~\cite{nouiehed2018convergence}. To fully explain how
algorithms actually behave in the benign $\delta=0,r^{\star}=1$ regime,
therefore, would require a more refined, problem-specific analysis,
which we leave as future work. 

\paragraph{Semidefinite programs for constructing counterexamples.}

In the analysis of benign nonconvexity in unconstrained low-rank matrix
recovery, a key proof technique is the use of semidefinite programming
for constructing counterexamples that admit spurious local minima~\cite{zhang2018much,zhang2019spurious,zhang2024improved}.
This is possible because the second-order conditions in the unconstrained
case reduce to the linear matrix inequalities $\nabla f(U)=0$ and
$\nabla^{2}f(U)\succeq0$. In the nonnegative setting, the second-order
condition requires copositivity $\inner{\nabla^{2}f(U)[\dot{U}]}{\dot{U}}\ge0$
over the critical cone $\dot{U}\in\Cone(U,\nabla f(U))$, which in
turn is \emph{coupled back to the candidate solution itself}, through
both $U$ and $\nabla f(U)$. SDP-based constructions do not readily
extend, and this is why our counterexample in \thmref{main} was manually
constructed by hand. Nevertheless, a more systematic approach to counterexamples
could illuminate deeper principles behind why the benign landscape
disappears under nonnegativity constraints. 

\paragraph{Squared-variable reparameterization}

A natural possibility suggested by a reviewer is to eliminate the
explicit constraint $U\ge0$ by reparameterizing $U=V\odot V$ (entrywise),
so that the optimization over $V\in\R^{n\times r}$ becomes unconstrained.
From the perspective of analyzing benign landscapes, this reparameterization
would restore access to the SDP machinery developed for unconstrained
problems, as mentioned above. At the same time, it is important to
note that the landscape of $\varphi(V)=f(V\odot V)$ can differ qualitatively
from the landscape of $f(U)$ over $U\ge0$. In particular, benignness
(or non-benignness) need not transfer between the two parametrizations.
Nevertheless, studying basic benign-landscape questions for $\varphi(V)$,
and attempting SDP-based counterexample searches in $V$, is a promising
direction, as it could yield useful insight into whether alternative
parameterizations can mitigate bad landscapes induced by the constraint
$U\ge0$.

\subsubsection*{Acknowledgments}

I am grateful to Xiaohui Chen, Salar Fattahi, Sabrina Zielinski, and
three reviewers for helpful feedback and insightful suggestions that
significantly strengthened the paper. 

\subsubsection*{Funding}

Financial support for this work was provided by NSF CAREER Award ECCS-2047462
and ONR Award N00014-24-1-2671.

\subsubsection*{Data Availability}

Data sharing not applicable to this article as no datasets were generated
or analysed during the current study.

\subsection*{Declarations}

\subsubsection*{Conflict of interest}

The author has no relevant financial or non-financial interests to
disclose.

\bibliographystyle{plain}
\bibliography{proof_half}

\end{document}